\numberwithin{equation}{section}
\newtheorem{theorem}{Theorem}[section]
\newtheorem{lemma}{Lemma}[section]
\newtheorem{proposition}{Proposition}[section]
\newtheorem{corollary}{Corollary}[section]
\newcommand\re[1]{(\ref{#1})}
\def\R{\mathbb{R}}
\def\Z{\mathbb{Z}}
\def\S{\mathcal{S}}
\def\N{\mathbb{N}}
\def\NN{\mathcal{N}}
\def\ZZ{\mathcal{Z}}
\def\F{\mathcal{F}}
\def\L{\mathcal{L}}
\def\T{\mathbb{T}}
\def\eps{\varepsilon}
\def\supp{\mathop{\rm supp}\nolimits}
\newcommand\cro[1]{\langle #1 \rangle}
\begin{document}
\begin{center}
\noindent {\Large \bf{Sharp ill-posedness and well-posedness results for the KdV-Burgers equation: the real line case}}
\end{center}
\vskip0.2cm
\begin{center}
\noindent
{\bf    Luc Molinet and St\'ephane Vento }\\
\end{center}
\vskip0.5cm \noindent {\bf Abstract.} { We complete the known results on the Cauchy problem in Sobolev spaces for the KdV-Burgers equation by
 proving that this equation is well-posed in $ H^{-1}(\R) $  with a solution-map that is analytic from $H^{-1}(\R) $ to  $C([0,T];H^{-1}(\R))$ whereas it is ill-posed
   in $ H^s(\R) $, as soon as $ s<-1 $, in the sense that the flow-map $u_0\mapsto u(t) $ cannot be continuous
    from $ H^s(\R) $ to even ${\cal D}'(\R) $ at any fixed $ t>0 $ small enough.  As far as we know, this is the first result of this type for a dispersive-dissipative equation.  The framework we develop  here should be  useful  to prove similar results  for other dispersive-dissipative models.}\vspace*{4mm} \\
\vskip0.2cm

\section{Introduction and main results}
The aim of this paper is to establish positive and negative optimal results on the local Cauchy problem in Sobolev spaces for the Korteweg-de Vries-Burgers (KdV-B) equation posed on the real line :
\begin{equation}\label{KdVB}
u_t +u_{xxx}-u_{xx}+u u_x =0
\end{equation}
where $ u=u(t,x) $ is a real valued function.\\
This  equation has been derived as an asymptotic  model for the propagation  of weakly nonlinear dispersive long waves in some physical contexts when dissipative effects occur (see \cite{OS}).
 It thus seems natural to compare the well-posedness results on the Cauchy problem
  for  the  KdV-B equation with the ones  for  the Korteweg-de-Vries (KdV) equation
 \begin{equation}\label{KDV}
 u_t+u_{xxx} +u u_x=0
 \end{equation}
 that  correspond to the case when dissipative
  effects are negligible and for the dissipative Burgers (dB) equation
 \begin{equation}\label{dB}
u_t-u_{xx} +u u_x=0
\end{equation}
  that corresponds to the case when dissipative effect are dominant.

  To make this comparison  more   transparent it is convenient to
   define     different notions of well-posedness (and consequently ill-posedness) related to the smoothness of the flow-map  (see in the same spirit \cite{KT}, \cite{Gerard}).

  Throughout this paper we shall say that  a Cauchy problem is  (locally)  $ C^0$-well-posed   in some  normed
  function space $ X $  if, for any initial data $ u_0\in X $, there exist a radius $ R>0 $, a  time
  $ T>0 $  and a unique solution $ u$,  belonging to some space-time function space
  continuously embedded in $ C([0,T];X) $, such that for any $ t\in [0,T] $ the map
   $ u_0\mapsto u(t) $ is continuous from the ball of $ X $ centered at $ u_0 $ with radius $ R $
    into $ X $.
    If   the map $ u_0\mapsto u(t) $ is of class $ C^k $, $k\in \N\cup\{\infty\} $, (resp. analytic) we will say that the Cauchy is $ C^k$-well-posed (resp. analytically well-posed).   Finally  a Cauchy problem will be  said to be $C^k $-ill-posed, $k\in  \N\cup\{\infty\} $,  if it is not $C^k $-well-posed.

For the KdV equation on the line  the situation is as follows: it is analytically  well-posed in $ H^{-3/4}(\R) $ (cf. \cite{KPV} and \cite{Guo} for the limit case) and  $ C^3 $-ill-posed below this index\footnote{See also \cite{CCT1} where it is proven that the solution-map is even not uniformly continuous on bounded sets below this index}  (cf. \cite{Bo1}).
  On the other hand the results for the dissipative Burgers equation
  are much clear. Indeed this equation is known to be analytically  well-posed in $ H^{s}(\R) $ for $ s\ge -1/2 $ (cf \cite{Di1} and \cite{Be} for the limit case)
  and $ C^0 $ ill-posed in $ H^s $ for $ s<-1/2 $ (cf.  \cite{Di1} ). At this stage it is interesting to notice that
   the critical Sobolev exponents obtained by  scaling considerations are respectively
    $ -3/2 $ for the KdV equation and $ -1/2 $ for the dissipative Burgers equation.  Hence for the KdV  equation there is an important  gap between this critical exponent and the best exponent obtained
     for  well-posedness.

Now, concerning  the KdV-Burgers equation,  Molinet and Ribaud  \cite{MR2} proved that this  equation is analytically  well-posed in $ H^s(\R) $ as soon as $ s>-1 $. They also established that the index $ -1 $ is critical for
 the  $C^2 $-well-posedness.  The surprising part of this result was that, according to the above results,  the $ C^\infty $ critical index $ s_c^\infty(KdVB)=-1 $
   was lower that the one of  the KdV equation $s_c^\infty(KdV)=-3/4 $    and also lower than the $ C^\infty $ index $ s_c^\infty(dB)=-1/2 $ of the dissipative Burgers
   equation.

 In this paper we want in some sense to complete this study by proving that the KdV-Burgers equation
 is analytically well-posed in $ H^{-1}(\R)$  and $ C^0 $-ill-posed in
  $H^s(\R) $ for $ s<-1 $ in the sense that the flow-map defined on $ H^{-1}(\R)$
   is not continuous for the topology inducted by $ H^s $, $s<-1 $, with values even in
    ${\cal D}' (\R)$.  It is worth emphasizing that the critical index $ s_c^0 =-1 $ is still far away from the critical index $ s_c=-3/2 $ given by the scaling symmetry of the KdV equation.
 We believe that this result strongly suggest that the KdV equation should also be $C^0$-ill-posed in $ H^s(\R) $
 for $ s<-1$.

To reach the critical Sobolev space $ H^{-1} (\R)$ we adapt the
      refinement of Bourgain's spaces that appeared in \cite{tataru} and
      \cite{Tao} to the framework developed in \cite{MR2}.  One of the main difficulty is related to the choice of the extension for negative times of the Duhamel operator (see the discussion in the beginning of Section \ref{sec-lin}). The approach we develop  here to overcome this difficulty should be useful to prove optimal  results
       for other dispersive-dissipative models. The ill-posedness result is due to a high to low frequency  cascade phenomena that was first observed in  \cite{BT} for a quadratic Schr\"odinger equation..

 At this stage it is worth noticing that, using the integrability theory, it was recently proved in \cite{KT} that the flow-map of KdV equation can be uniquely
  continuously extended in $H^{-1}(\T) $.  Therefore, on the torus,  KdV  is $ C^0 $-well-posed in $ H^{-1} $ if one takes as
   uniqueness class, the class of strong  limit in $C([0,T];H^{-1}(\T)) $ of smooth solutions. In the present work we use in a crucial way the global Kato smoothing effect that does not hold on the torus. However, in a forthcoming paper
    (\cite{MV2}) we will show how one can modify the approach developed here to prove that the  same results hold on the torus, i.e. analytic well-posedness in $ H^{-1}(\T)$  and $ C^0 $-ill-posedness in
  $H^s(\T) $ for $ s<-1 $. In view of the result of Kappeler and Topalov for KdV  it thus appears that, at least on the torus, even if the dissipation part of the KdV-Burgers equation
 (it is important to notice that
 the dissipative term $ -u_{xx} $ is of lower order than the dispersive one $ u_{xxx}$)
 allows to lower the $ C^\infty $ critical index with respect to the KdV equation,
       it does not  permit to improve the
      $ C^0$ critical index .

  Our results can be summarized as follows:
\begin{theorem}\label{wellposed}
 The Cauchy problem associated to  (\ref{KdVB}) is  locally analytically  well-posed in $ H^{-1}(\R) $. 
Moreover,  at every point $ u_0 \in  H^{-1}(\R) $ there exist $ T=T(u_0)>0 $ and $R=R(u_0)>0 $ such that  the solution-map $ u_0\mapsto u  $ is analytic from the ball centered at $ u_0 $ with radius $R$ of $ H^{-1}(\R) $ into $ C([0,T];H^{-1}(\R)) $. Finally, the solution  $ u$ can be extended for all positive
  times  and belongs to $
 C(\R_+^*;H^\infty(\R)) $. 
\end{theorem}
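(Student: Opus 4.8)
The plan is to recast (\ref{KdVB}) in its Duhamel form
\[
u(t) = W(t)u_0 - \frac{1}{2}\int_0^t W(t-t')\,\partial_x(u^2)(t')\,dt',
\]
where $W(t)$ is the linear KdV--Burgers semigroup, whose Fourier symbol $e^{it\xi^3 - t\xi^2}$ superposes the KdV dispersion with the parabolic dissipation $e^{-t\xi^2}$, and to solve this equation by a contraction argument in a Bourgain-type resolution space. First I would introduce a space $Y$ built on the $X^{s,b}$-norm attached to the resolvent weight $\cro{i(\tau-\xi^3)+\xi^2}$, enriched at low modulations in the spirit of the Tataru--Tao refinement (the space $\B$) so that the estimates close at the endpoint $s=-1$; the space $Y$ should embed continuously into $C([0,T];H^{-1}(\R))$. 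The goal is then to reduce the whole problem to two estimates: the linear bound $\|W(t)u_0\|_Y \lesssim \|u_0\|_{H^{-1}}$ and a single bilinear estimate
\[
\Big\|\int_0^t W(t-t')\,\partial_x(uv)(t')\,dt'\Big\|_Y \lesssim \|u\|_Y\,\|v\|_Y .
\]

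The core difficulty, as emphasized in the introduction, is the choice of a suitable extension to all of $\R$ of the Duhamel operator. Since the dissipative factor $e^{-t\xi^2}$ grows without bound as $t\to-\infty$, the naive extension destroys the Bourgain-space structure; I would instead replace the growing factor by a controllable one (morally $e^{-t\xi^2}\rightsquigarrow e^{-|t|\xi^2}$ on the extended piece) after a smooth time-localization, splitting the Duhamel term according to whether the dissipation is active or negligible relative to the time cutoff, and absorbing the resulting error terms using the gain furnished by the resolvent weight. The bilinear estimate then reduces to a case analysis in the frequency--modulation variables $(\xi,\tau)$: one uses the algebraic relation linking the three interacting frequencies to detect the resonant region, and exploits the extra $\xi^2$ appearing in $\cro{i(\tau-\xi^3)+\xi^2}$ to absorb the derivative in $\partial_x(uv)$ precisely at $s=-1$. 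I expect this case analysis, together with the justification that the chosen extension does not spoil the weight, to be the main obstacle.

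Granting the two estimates, the map $\Phi(u)=W(t)u_0-\tfrac12\int_0^t W(t-t')\partial_x(u^2)\,dt'$ is a contraction on a small ball of $Y$ for $T$ and radius depending only on $\|u_0\|_{H^{-1}}$, which gives local existence and uniqueness. Because the nonlinearity is quadratic, the fixed point is a convergent power series in $u_0$ whose coefficients are the multilinear Duhamel iterates, each controlled by the bilinear bound; the analytic implicit function theorem (or direct summation of the series) then yields analyticity of $u_0\mapsto u$ from the $H^{-1}(\R)$-ball into $C([0,T];H^{-1}(\R))$, establishing the first two assertions and the values $R=R(u_0)$, $T=T(u_0)$.

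Finally, for the smoothing and global statements I would exploit the parabolic character of the dissipation. The factor $e^{-t\xi^2}$ in $W(t)$ gains arbitrary spatial regularity for each fixed $t>0$, so a bootstrap on the integral equation upgrades the local $H^{-1}$ solution to $u\in C(\R_+^*;H^\infty(\R))$. Global existence then follows from the dissipative a priori estimate: once the solution is smooth, multiplying (\ref{KdVB}) by $u$ and integrating gives $\frac{d}{dt}\|u\|_{L^2}^2 = -2\|u_x\|_{L^2}^2\le 0$, and analogous energy identities control the higher Sobolev norms, ruling out finite-time blow-up and allowing the solution to be continued for all positive times.
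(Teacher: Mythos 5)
Your overall skeleton---Duhamel formulation, a Besov--Bourgain space built on the weight $\cro{i(\tau-\xi^3)+\xi^2}$ with a Tataru--Tao type refinement, a modified extension of the Duhamel operator for negative times (morally $e^{-t\xi^2}\rightsquigarrow e^{-|t|\xi^2}$), a resonance-based bilinear case analysis, power-series analyticity of the flow, and energy dissipation for the global smooth continuation---is indeed the paper's strategy. But there is a genuine gap at the crucial fixed-point step. You assert that $\Phi$ is a contraction on a small ball of $Y$ ``for $T$ and radius depending only on $\|u_0\|_{H^{-1}}$.'' This cannot work as stated: $H^{-1}(\R)$ is the \emph{critical} space for this method, so the linear and bilinear estimates you propose are uniform in $T$ --- no factor $T^\nu$ is available at this endpoint regularity (any such gain would push the argument below $H^{-1}$, contradicting Theorem \ref{illposed}). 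With estimates of the form $\|W(t)u_0\|_Y\lesssim\|u_0\|_{H^{-1}}$ and $\|\L\partial_x(uv)\|_Y\lesssim\|u\|_Y\|v\|_Y$, shrinking $T$ buys nothing, and the contraction closes only under a smallness condition on $\|u_0\|_{H^{-1}}$; you obtain small-data well-posedness only. Note that the theorem statement itself has $T=T(u_0)$, i.e.\ dependence on the profile of the data rather than on its norm --- a telltale sign of criticality that your claim contradicts.

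The paper's Section \ref{sectionwell} is devoted precisely to removing this size restriction, and some such device is indispensable. One equips $\S^{-1}$ with the equivalent metric $\|u\|_{\ZZ_\beta}=\inf\{\|u_1\|_{\S^{-1}}+\beta^{-1}\|u_2\|_{\S^0}\colon u=u_1+u_2\}$, proves a \emph{modified} bilinear estimate (Proposition 6.1): if one factor lies in the subcritical space $\S^0$ and both are supported in $[-T,T]$, then $\|\partial_x(uv)\|_{\NN^{-1}}\lesssim T^\nu\|u\|_{\S^0}\|v\|_{\S^{-1}}$, the $T^\nu$ gain coming from a Strichartz-type time-localization lemma. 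Then the data is split as $u_0=P_{\lesssim N}u_0+P_{\gg N}u_0$: the high-frequency part is small in $H^{-1}$ for $N$ large, while the low-frequency part lies in $L^2$ and is fed through the $\S^0$ component with weight $1/\beta$. Choosing $\beta\gtrsim N\|u_0\|_{H^{-1}}/\alpha$ and $T\sim\beta^{-2/\nu}$ makes $F^T_\phi$ contractive on a ball of $\ZZ_\beta$. Without this (or an equivalent large-data mechanism, such as subtracting a smooth evolution and solving a difference equation), your argument does not prove the theorem as stated. A secondary, more minor point: for the $C(\R_+^*;H^\infty)$ statement the paper does not bootstrap directly from $H^{-1}$ (where $\partial_x(u^2)$ is not manageable pointwise in time); it uses $\S^{-1}_\tau\hookrightarrow L^2_{t,x}$ to find arbitrarily small times $t_0$ with $u(t_0)\in L^2$, and then invokes the global $L^2$ theory of \cite{MR2}, which already provides $C(\R_+^*;H^\infty)$ regularity and global extension.
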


\begin{theorem}\label{illposed}
The Cauchy problem associated to  (\ref{KdVB}) is ill-posed in $ H^s(\R) $ for $s <-1 $ in the following sense: there exists $ T>0 $ such that for any $ 0<t<T $,  the flow-map $ u_0\mapsto u(t) $ constructed in Theorem \ref{wellposed} is discontinuous at the origin from $ H^{-1}(\R) $  endowed with the topology inducted by $ H^s(\R) $ into ${\mathcal D}'(\R) $.
\end{theorem}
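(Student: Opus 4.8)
The plan is to prove ill-posedness (Theorem \ref{illposed}) by exhibiting an explicit sequence of initial data that is small in $H^s$ for $s<-1$ but whose corresponding solutions, at a fixed positive time, fail to converge in $\mathcal{D}'(\R)$—thereby contradicting continuity of the flow-map. The mechanism, as the introduction hints, is a high-to-low frequency cascade: I would take initial data concentrated at a single high frequency $N\to\infty$, of the form $u_{0,N} = \gamma N^{-s}\,\phi_N$ where $\phi_N$ has Fourier support localized near $\pm N$ and $\gamma>0$ is a fixed amplitude. Because $s<-1$, one can arrange $\|u_{0,N}\|_{H^s}\to 0$ (or stays bounded by a small constant we can send to zero), so these data approach $0$ in the $H^s$ topology, while continuity of the flow-map would force the solutions $u_N(t)$ to approach the solution with zero data, namely $0$, in $\mathcal{D}'(\R)$.

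\medskip

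First I would set up the analysis via the Duhamel/Picard expansion of the solution around the linear evolution. Writing $u_N = \sum_{k\ge 1} u_N^{(k)}$ where $u_N^{(1)}(t) = W(t)u_{0,N}$ is the free dispersive-dissipative evolution (with symbol combining the dispersive phase $i\xi^3$ and the dissipative decay $-\xi^2$) and $u_N^{(k)}$ collects the $k$-linear contributions in the data, the analyticity established in Theorem \ref{wellposed} guarantees this series converges for data of the size we use. The key is to isolate the \emph{second-order} term $u_N^{(2)}$, which is bilinear in $u_{0,N}$ and is produced by the nonlinearity $uu_x = \tfrac12\partial_x(u^2)$ acting on two copies of the high-frequency data. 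When two frequencies near $+N$ and $-N$ interact, the output frequency is near $0$: this is precisely the high-to-low cascade. I would compute the Fourier transform of $u_N^{(2)}(t)$ explicitly through the Duhamel formula, pairing the two high-frequency packets so that the resonant interaction deposits energy at low frequencies.

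\medskip

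The heart of the argument is then a lower bound showing that $u_N^{(2)}(t)$, tested against a fixed Schwartz function $\psi$, does \emph{not} tend to zero as $N\to\infty$ at a fixed time $t>0$, while simultaneously controlling the linear term $u_N^{(1)}$ and the higher-order remainder. The linear term $u_N^{(1)}(t)$ lives at high frequency and, paired against the fixed test function $\psi$ whose Fourier transform decays rapidly, contributes negligibly to $\langle u_N(t),\psi\rangle$; likewise one shows $\sum_{k\ge 3}\langle u_N^{(k)}(t),\psi\rangle\to 0$ using the convergence of the Picard series together with the smallness/localization of the data. The cascade term, by contrast, produces a fixed nonzero (or non-vanishing) contribution $\langle u_N^{(2)}(t),\psi\rangle$, obtained by carefully evaluating the oscillatory integral in the bilinear Duhamel expression and exploiting that at low output frequency the dissipative symbol $e^{-\xi^2 t}\approx 1$ does not kill the interaction. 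Thus $\langle u_N(t),\psi\rangle$ stays bounded away from $0$, contradicting convergence to $0$ in $\mathcal{D}'(\R)$.

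\medskip

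The main obstacle I anticipate is making the second-order cascade term's lower bound genuinely robust: one must choose the phases (the relative placement of the two frequency packets around $\pm N$, and possibly a controlled offset so the output frequency is a fixed nonzero constant rather than exactly $0$) so that the resonance function $\omega(\xi_1)+\omega(\xi_2)-\omega(\xi_1+\xi_2)$ does not force destructive oscillation in $N$, and so that the dissipative factor $e^{-t(\xi_1^2+\xi_2^2)}$ acting on the \emph{inputs} does not prematurely annihilate the term. Since each input frequency is of size $N$, the factor $e^{-tN^2}$ threatens to be catastrophically small; the resolution is that the dissipation appears inside the time-integrated Duhamel kernel in such a way that the relevant quantity is governed by the low output frequency, not by the high input frequencies—this is exactly the delicate point where the dispersive-dissipative structure differs from the purely dissipative Burgers case, and it is where the scaling-subcritical threshold $s=-1$ enters. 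Quantifying this balance, and confirming that the remainder of the Picard series is uniformly negligible, is the technically demanding core of the proof.
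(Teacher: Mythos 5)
Your overall strategy does coincide with the paper's: data concentrated at frequencies $\pm N$ and vanishing in $H^s$ for $s<-1$, the analytic (Picard) expansion of the flow map supplied by Theorem \ref{wellposed}, isolation of the second iterate where the high-high to low frequency cascade lives, and disposal of the linear term (high frequency against a fixed target) and of the cubic-and-higher tail (geometric smallness). However, there is a concrete error that would break the argument as written: the normalization $u_{0,N}=\gamma N^{-s}\phi_N$. If $\phi_N$ is an $O(1)$-in-$L^2$ packet at frequencies $\pm N$, this gives $\|u_{0,N}\|_{H^s}\sim\gamma$ but $\|u_{0,N}\|_{H^{-1}}\sim\gamma N^{-1-s}\to\infty$ for $s<-1$. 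The series $u(t,u_0)=S(t)u_0+\sum_{k\geq 2}A_k(t,u_0^k)$ from Theorem \ref{wellposed} converges only on a fixed ball of $H^{-1}$ (that is the space in which the solution map is analytic), so with your data there is no uniform-in-$N$ control of the expansion; indeed the second iterate itself has low-frequency Fourier amplitude $\sim(\gamma N^{-s})^2/N^2=\gamma^2N^{-2s-2}\to\infty$, and nothing then connects it to the actual solution. The normalization is forced the other way around: take $\widehat{u_{0,N}}\sim\varepsilon N$ on sets of measure $O(1)$ at $\pm N$, i.e. $u_{0,N}=\varepsilon\phi_N$ with $\|\phi_N\|_{H^{-1}}\sim 1$ and $\varepsilon$ small but fixed; then $\|u_{0,N}\|_{H^s}\sim\varepsilon N^{1+s}\to0$ automatically, the expansion applies uniformly in $N$, and the cascade output is of size exactly $\varepsilon^2$ uniformly in $N$, because the $N^2$ coming from the product of the two Fourier amplitudes cancels the $N^2$ resonance denominator.

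This exact $O(1)$ balance is precisely the computation you defer as "the technically demanding core", and it is where your normalization error would have surfaced. For the record, the paper's execution of that step is short: writing out $\mathcal{F}_x A_2(t,\phi_N,\phi_N)$, the time-integrated kernel has numerator $e^{-(\xi_1^2+(\xi-\xi_1)^2)t}e^{3i\xi\xi_1(\xi-\xi_1)t}-e^{-\xi^2t}$ and denominator $-2\xi_1(\xi-\xi_1)+3i\xi\xi_1(\xi-\xi_1)$; at fixed $t>0$ the dissipative factor $e^{-2N^2t}$ annihilates the oscillatory term, leaving the non-oscillatory boundary term $-e^{-\xi^2t}$ (your intuition that the surviving contribution sees only the output frequency is correct), while the denominator has modulus $\sim N^2$ and real part $-2\xi_1(\xi-\xi_1)\sim 2N^2$ of \emph{fixed sign} on the interaction set. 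Hence the $\xi_1$-integral suffers no cancellation and is $\gtrsim e^{-t/4}N^{-2}$ in modulus, which with the amplitude $N^2$ gives $\|A_2(t,\phi_N,\phi_N)\|_{H^s}\geq C_0>0$ at frequencies $|\xi|\leq 1/2$, uniformly in $N$ large. In particular no frequency offset of the packets is needed to avoid "destructive oscillation in $N$": the surviving term does not oscillate in $\xi_1$ or $N$ at all. Your final step (pairing against a fixed Schwartz function rather than taking an $H^s$-norm lower bound) can be made to work, since the limiting low-frequency Fourier profile of $A_2$ is explicit and $N$-independent, but it requires the phase control you only allude to; the paper avoids this by working with moduli and the fixed-sign real part. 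Without the corrected normalization and this computation, the proposal remains a plan rather than a proof.
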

\noindent
{\bf Acknowlegements:}   L.M.  was partially supported by the ANR project
 "Equa-Disp".

        \section{Ill-posedness}
    The ill-posedness result can be viewed as an application of a  general result proved in
      \cite{BT}.
      Roughly speaking this general ill-posedness result   requires
       the two following ingredients:
   \begin{enumerate}
  \item  The equation is analytically well-posed until some index $ s_c^\infty $ with a solution-map that is also analytic.
  \item Below this index one iteration of the Picard scheme is not continuous. The discontinuity should be driven by  high frequency
   interactions
   that blow up in  frequencies of order
  least or equal to one.
   \end{enumerate}
   The first ingredient is given by Theorem \ref{wellposed} whereas the second one has  been derived
   in \cite{MR2} where the discontinuity of the second iteration of the Picard scheme
    in $ H^s(\R) $ and $ H^s(\T) $ for $ s<-1 $ is established.

   However, due to the nature of the equation, our result is a little better than the one
    given by the general theory developed in  \cite{BT}. Indeed, we will be able to prove the discontinuity of the flow-map $ u_0\mapsto u(t) $ for any fixed $ t>0 $ less than some $ T>0 $ and not only of the solution-map $ u_0\mapsto u $.
   Therefore for sake of completeness we will prove the result with hand here.

   Let us first recall the counter-example constructed in
   \cite{MR2} that we renormalize here in $ H^{-1}(\R)$.  We define the sequence of initial data $ \{\phi_N\}_{N\ge 1} $ by
    \begin{equation}
  \label{defphi}
  \hat{\phi}_N = N^{-1} \Bigl( \chi_{I_N}(\xi)+
    \chi_{I_N}(-\xi)\Bigr) \; ,
  \end{equation}
 where $ I_N =[N,N+2] $ and $ \hat{\phi}_N $ denotes the space Fourier transform of $ \phi_N$. \\
 Note that $ \| \phi_N \|_{H^{-1}(\R)} \sim 1 $ and $ \phi_N \to 0 $ in $ H^s(\R) $ for $ s<-1
 $.  This sequence yields a counter-example to the continuity of  the
 second iteration of the Picard Scheme  in $H^s(\R) $, $ s<-1$, that is given by
    $$
A_2(t,h,h) = \int_0^t S(t-t') \partial_x [S(t')h ]^2
   \, dt'
 $$
where $S$ is the semi-group associated to the linear part of (\ref{KdVB}) (see \re{trtr2}).
 Indeed, computing the space Fourier transform we get
 \begin{equation*}
 \begin{split}
{\cal F}_x(A_2(t,\phi_N,\phi_N))(\xi) & =  \int_\R
 e^{-t \xi^2} \, e^{it \xi^3} \,
\hat{\phi}_N(\xi_1) \hat{\phi}_N (\xi-\xi_1) \\
 &  \quad (i\xi) \, \int_0^t e^{-(\xi_1^2+
  (\xi-\xi_1)^2-\xi^2)t'} \,
 e^{i(\xi_1^3 + (\xi-\xi_1)^3 -\xi^3) t' } \, dt'\, d\xi_1  \\
 & =   (i\xi) \,e^{it \xi^3} \,e^{-t \xi^2} \int_\R
\hat{\phi}_N (\xi_1) \hat{\phi}_N (\xi-\xi_1)\,  \\
 & \hspace*{28mm}\frac{
e^{-(\xi_1^2+ (\xi-\xi_1)^2-\xi^2)t} \, e^{i 3 \xi \xi_1
(\xi-\xi_1)t}-1}{-2\xi_1 (\xi-\xi_1)+i  3\xi \xi_1 (\xi-\xi_1)} \,
 d\xi_1   \;,
 \end{split}
 \end{equation*}
so that
\arraycolsep2pt
\begin{eqnarray*}
\| A_2(t,\phi_N,\phi_N)\|_{H^{s}}^2
 & \ge & \int_{-1/2}^{1/2} (1+|\xi|^2)^{s} \, \left| {\cal
F}_x(A_2(t,\phi_N,\phi_N))(\xi)\right|^2 \, d\xi \\
 & = & N^{4} \int_{-1/2}^{1/2}
 (1+|\xi|^2)^{s} |\xi|^2 \,\\
  & & \hspace*{8mm} \Bigl| \int_{K_\xi}
 \frac{
e^{-(\xi_1^2+ (\xi-\xi_1)^2)t} \, e^{i 3 \xi \xi_1
(\xi-\xi_1)t}-e^{-\xi^2 \, t}}{-2\xi_1 (\xi-\xi_1)+i  3\xi \xi_1
(\xi-\xi_1)} \, d\xi_1 \Bigr|^2 \, d\xi \; ,
\end{eqnarray*}
where
$$
K_\xi =\{ \xi_1 \, / \, \xi -\xi_1 \in I_N , \, \xi_1 \in -I_N \}
 \cup \{ \xi_1 \, / \, \xi_1 \in I_N , \,\xi- \xi_1 \in -I_N \}
 \;.
$$
Note that for any $ \xi \in [-1/2,1/2] $, one has
 $ \mbox{mes}(K_\xi) \ge 1 $ and
 $$
\left\{ \begin{array}{rcl}
 3\xi \xi_1 (\xi-\xi_1) & \sim &  N^2 \\
 2 \xi_1 (\xi-\xi_1) & \sim &  N^2
 \end{array}
 \right.
 , \quad \forall \xi_1 \in K_\xi \, .
$$
Therefore, fixing  $ 0<t<1 $  we have
$$
{\cal{R}}e \, ( e^{-(\xi_1^2+ (\xi-\xi_1)^2)t} \, e^{i 3 \xi
\xi_1 (\xi-\xi_1)t}-e^{-\xi^2 \, t}) \le - e^{- t/4}
 + e^{-2 (N+2)^2 t } \; ,
$$
which leads for $ N=N(t) >0 $ large enough to
$$
\Bigl| \int_{K_\xi}
 \frac{
e^{-(\xi_1^2+ (\xi-\xi_1)^2)t} \, e^{i 3 \xi \xi_1
(\xi-\xi_1)t}-e^{-\xi^2 \, t}}{-2\xi_1 (\xi-\xi_1)+i  3\xi \xi_1
(\xi-\xi_1)} \, d\xi_1 \Bigr| \geq C  \frac{ e^{-
t/4}}{N^2 }
$$
and thus
\begin{equation}
\label{illposed1ineq4} \|A_{2}(t,\phi_N,\phi_N) \|_{H^s}^2 \geq C
e^{-t/4}\ge C_0
\end{equation}
for  some positive constant $ C_0>0 $. Since $ \phi_N\to 0 $  in $ H^s(\R) $, for $ s<-1 $, this ensures that, for any fixed $ t>0 $,  the map $ u_0\mapsto A_2(t,u_0,u_0) $ is not continuous at the origin from $ H^s(\R) $  into ${\cal D}' (\R)$. \\
 Now,
 we will use that  $ A_2(t,\phi_N,\phi_N) $ is of order at least one in $ H^{s}(\R)$  to prove that somehow $ A_2(t,\varepsilon\phi_N,\varepsilon\phi_N) $ is the main contribution to $ u(t,\varepsilon\phi_N) $ in $ H^s(\R) $ as soon as $ s<-1 $, $ \varepsilon >0 $
    is small and $ N $ is large enough. The discontinuity of $ u_0 \mapsto u(t) $
     will then follow from the one of $ u_0 \mapsto A_2(t,u_0,u_0) $.\\
     According to Theorem \ref{wellposed} there exist $ T>0 $ and $ \varepsilon_0>0 $ such that for any
      $|\varepsilon|\le \varepsilon_0$, any $\| h\|_{H^{-1}(\R)} \le 1$ and $0\le t\le T $,
       $$
       u(t,\varepsilon h) =\varepsilon S(t) h +  \sum_{k=2}^{+\infty} \varepsilon^k A_k(t,h^k)
     $$
     where $ h^k:=(h,\ldots,h) $,  $h^k\mapsto A_k(t,h^k) $ is a  $k$-linear  continuous map from $ H^{-1}(\R)^k $
      into  $ C([0,T];H^{-1}(\R)) $ and the series  converges absolutely in $ C([0,T];H^{-1}(\R)) $. In particular,
      $$
      u(t,\varepsilon \phi_N)-\varepsilon^2 A_2(t,\phi_N,\phi_N)= \varepsilon S(t) \phi_N
      + \sum_{k=3}^{+\infty} \varepsilon^k A_k(t,\phi_N^k) \; .
      $$
    On the other hand, $ \|S(t)\phi_N\|_{H^s(\R)}\le   \| \phi_N\|_{H^s(\R)} \sim N^{1+s}$
     and
              $$
\displaystyle  \Bigl\|\sum_{k=3}^\infty \varepsilon^k A_k
(t,\phi_N^k)\Bigr\|_{H^{-1}}
         \le \Bigl( \frac{\varepsilon}{\varepsilon_0}\Bigr)^3
         \sum_{k=3}^\infty \varepsilon_0^k\|  A_k
         (t,\phi_N)\|_{H^{-1}}\le C \varepsilon^3 \; .
         $$
Hence, for $ s<-1 $,
$$\sup_{t\in[0,T]}\Bigl\|u(t,\varepsilon \phi_N)-\varepsilon^2
A_2(t,\phi_N,\phi_N) \Bigr\|_{H^{s}(\R)}\le C
\varepsilon^3+O(N^{1+s})\; .$$ In view of  \re{illposed1ineq4}
this ensures that, fixing $ 0<t<1 $ and  taking  $ \varepsilon $ small enough and $ N $
large enough,
 $\varepsilon^2 A_2(t,\phi_N,\phi_N)$ is a ``good'' approximation of $
 u(t, \varepsilon \phi_N)$.  In particular, taking $
 \varepsilon \le C_0 C^{-1}/4 $ we get
 $$
\|u(t,\varepsilon \phi_N) \|_{H^{s}(\R)} \ge C_0\varepsilon^2/2
 +O(N^{1+s})  \; .
 $$
Since $ u(t,0)\equiv 0 $ and  $ \phi_N \to 0 $ in  $ H^s(\R) $
for $ s<-1 $ this leads to the discontinuity of the flow-map at
the origin by letting $ N $ tend  to infinity.
 It is worth noticing that since $ \phi_N \rightharpoonup  0 $ in
   $ H^{-1}(\R) $ we also get that  $ u_0 \mapsto u(t,u_0) $ is discontinuous from
  $ H^{-1}(\R) $ equipped with its weak topology with values even in ${\cal D}'(\R) $.
 \section{Resolution space}\label{sec-space}
In this section we introduce a few notation and we define our functional framework.

For $A,B>0$, $A\lesssim B$ means that there exists $c>0$ such that $A\leq cB$. When $c$ is a small constant we use $A\ll B$. We write $A\sim B$ to denote the statement that $A\lesssim B\lesssim A$.
For $u=u(t,x)\in\S'(\R^2)$, we denote by $\widehat{u}$ (or $\F_xu)$ its Fourier transform in space, and $\widetilde{u}$ (or $\F u$) the space-time Fourier transform of $u$. We consider the usual Lebesgue spaces $L^p$, $L^p_xL^q_t$ and abbreviate $L^p_xL^p_t$ as $L^p$. Let us define the Japanese bracket $\cro{x}=(1+|x|^2)^{1/2}$ so that the standard non-homogeneous Sobolev spaces are endowed with the norm $\|f\|_{H^s}=\|\cro{\nabla}^sf\|_{L^2}$.

We also need a Littlewood-Paley analysis. Let $\eta\in C^\infty_0(\R)$ be such that $\eta\geq 0$, $\supp \eta\subset [-2,2]$, $\eta\equiv 1$ on $[-1,1]$. We define next $\varphi(\xi)=\eta(\xi)-\eta(2\xi)$.
Any summations over capitalized variables such as $N,L$ are presumed to be dyadic, i.e. these variables range over numbers of the form $2^\ell$, $\ell\in \Z$. We set $\varphi_N(\xi)=\varphi(\xi/N)$ and define the operator
$P_N$ by $\F(P_Nu)=\varphi_N \widehat{u}$. We introduce $\psi_L(\tau,\xi)=\varphi_L(\tau-\xi^3)$ and for any $u\in\S'(\R^2)$,
$$\F_x(P_Nu(t))(\xi)=\varphi_N(\xi)\hat{u}(t,\xi),\quad \F(Q_Lu)(\tau,\xi)=\psi_L(\tau,\xi)\tilde{u}(\tau,\xi).$$
Roughly speaking, the operator $P_N$ localizes in the annulus $\{|\xi|\sim N\}$ whereas $Q_L$ localizes in the region $\{|\tau-\xi^3|\sim L\}$.

Furthermore we define more general projection $P_{\lesssim N}=\sum_{N_1\lesssim N}P_{N_1}$, $Q_{\gg L}=\sum_{L_1\gg L}Q_{L_1}$ etc.

Let $e^{-t\partial_{xxx}}$ be the propagator associated to the Airy equation and define the two parameters linear operator $W$ by
\begin{equation}
\F_x(W(t,t')\phi)(\xi)=\exp(it\xi^3-|t'|\xi^2)\hat{\phi}(\xi),\quad t\in\R.\label{trtr}
\end{equation}
The operator $W : t\mapsto W(t,t)  $ is clearly an extension to $ \R $ of the linear semi-group $ S(\cdot) $ associated with \re{KdVB} that is given by
\begin{equation}
\F_x(S(t)\phi)(\xi)=\exp(it\xi^3-t \xi^2)\hat{\phi}(\xi),\quad t\in\R_+.\label{trtr2}
\end{equation}
We will mainly work on the integral formulation of (\ref{KdVB}):
\begin{equation}\label{duhamel}
u(t) = S(t)u_0-\frac 12\int_0^tS(t-t')\partial_xu^2(t')dt',\quad t\in\R_+.
\end{equation}
Actually, to prove the local existence result, we will apply a fixed point argument to the following extension
 of (\ref{duhamel})  (See Section \ref{sec-lin} for some explanations on this choice).
\begin{eqnarray}\label{eq-int}u(t)& =&\eta(t)\Bigl[W(t)u_0-\frac 12 \chi_{\R_+}(t)\int_0^tW(t-t',t-t')\partial_xu^2(t')dt'
\nonumber \\
&&\hspace*{20mm}-\frac 12 \chi_{\R_-}(t)\int_0^tW(t-t',t+t')\partial_xu^2(t')dt'\Bigr] .
\label{eq-int}\end{eqnarray}
If $u$ solves (\ref{eq-int}) then $u$ is a solution of (\ref{duhamel}) on $[0, T]$, $T<1$.

In \cite{MR2}, the authors performed the iteration process in the space $X^{s,b}$ equipped with the norm
$$\|u\|_{X^{s,b}}=\|\cro{i(\tau-\xi^3)+\xi^2}^b\cro{\xi}^s\widetilde{u}\|_{L^2}$$
which take advantage of the mixed dispersive-dissipative part of the equation. In order to handle the endpoint index $s=-1$ without encountering logarithmic divergence, we will rather work in its Besov version $X^{s,b,q}$ (with $q=1$) defined as the weak closure of the test functions that are uniformly bounded by the norm
$$\|u\|_{X^{s,b,q}}=\Big(\sum_N\Big[\sum_L \cro{N}^{sq}\cro{L+N^2}^{bq}\|P_NQ_Lu\|_{L^2_{xt}}^q\Big]^{2/q}\Big)^{1/2}.$$
This Besov refinement, which usually provides suitable controls for nonlinear terms, is not sufficient here to get the desired bound especially in the \textit{high-high} regime, where the nonlinearity interacts two components of the solution $u$ with the same high frequency. To handle these divergences, inspired by \cite{Tao}, we introduce, for
$b\in\{\frac 12,-\frac 12\}$, the space $Y^{s,b}$ endowed with the norm
$$
\|u\|_{Y^{s,b}} =\Big(\sum_N [\cro{N}^s\|\F^{-1}[(i(\tau-\xi^3)+\xi^2+1)^{b+1/2}\varphi_N \widetilde{u}]\|_{L^1_tL^2_x}]^2\Big)^{1/2},
$$
so that
$$\|u\|_{Y^{-1,\frac 12}} \sim \Big(\sum_N[\cro{N}^{-1}\|(\partial_t+\partial_{xxx}-\partial_{xx}+I)P_Nu\|_{L^1_tL^2_x}]^2\Big)^{1/2}.$$

Next we form the resolution space $\S^{s}=X^{s,\frac 12,1}+Y^{s,\frac 12}$, and the "nonlinear space" $\NN^{s}=X^{s,-\frac 12,1}+Y^{s,-\frac 12}$ in the usual way:
$$\|u\|_{X+Y}=\inf\{\|u_1\|_X+\|u_2\|_Y: u_1\in X, u_2\in Y, u=u_1+u_2\}.$$

In the rest of this section, we study some basic properties of  the function space $\S^{-1}$.

\begin{lemma}\label{lem-Xinfty}
For any $\phi\in L^2$,
$$\Big(\sum_L[L^{1/2}\|Q_L(e^{-t\partial_{xxx}}\phi)\|_{L^2}]^2\Big)^{1/2}\lesssim \|\phi\|_{L^2}.$$
\end{lemma}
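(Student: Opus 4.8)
The starting point is to compute the space-time Fourier transform of the free evolution. Since the pure Airy flow $e^{-t\partial_{xxx}}\phi$ has space-time Fourier transform proportional to $\delta(\tau-\xi^3)\widehat\phi(\xi)$ and is not square integrable on $\R^2$, the bound is to be read with the time localization that is used throughout; I would accordingly set $v=\eta(t)e^{-t\partial_{xxx}}\phi$, whose space-time Fourier transform is
$$\widetilde v(\tau,\xi)=\widehat\eta(\tau-\xi^3)\,\widehat\phi(\xi),$$
where $\widehat\eta$ denotes the one-dimensional Fourier transform of $\eta$, a Schwartz function because $\eta\in C^\infty_0(\R)$. The whole difficulty is thereby transferred to the decay of $\widehat\eta$ in the variable $\mu:=\tau-\xi^3$, which measures the distance to the Airy curve on which $Q_L$ localizes.

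The next step is a direct application of Plancherel. Since $\F(Q_Lv)(\tau,\xi)=\varphi_L(\tau-\xi^3)\widehat\eta(\tau-\xi^3)\widehat\phi(\xi)$, performing for each fixed $\xi$ the change of variables $\mu=\tau-\xi^3$ decouples the two variables and gives
$$\|Q_Lv\|_{L^2}^2\lesssim\|\phi\|_{L^2}^2\int_\R |\varphi_L(\mu)|^2\,|\widehat\eta(\mu)|^2\,d\mu.$$
The point of this factorization is that the spatial input contributes exactly the factor $\|\phi\|_{L^2}^2$, while the remaining weight is a clean one-dimensional integral in $\mu$ that I can then sum against the dyadic weight $L$.

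It thus remains to control $\sum_L L\int_\R |\varphi_L(\mu)|^2|\widehat\eta(\mu)|^2\,d\mu$. Because $\varphi_L(\mu)=\varphi(\mu/L)$ is supported in $|\mu|\sim L$ and the dyadic pieces have bounded overlap, one has the pointwise bound $\sum_L L\,|\varphi_L(\mu)|^2\lesssim|\mu|$, whence
$$\sum_L\big[L^{1/2}\|Q_Lv\|_{L^2}\big]^2\lesssim\|\phi\|_{L^2}^2\int_\R|\mu|\,|\widehat\eta(\mu)|^2\,d\mu\lesssim\|\phi\|_{L^2}^2,$$
the last integral being finite since $\widehat\eta$ is rapidly decreasing. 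The only genuinely delicate point is this final summation: the weight $L^{1/2}$ is exactly critical, being matched by the single power $|\mu|^{1/2}\sim L^{1/2}$ available on the support of $\varphi_L$, and convergence of the $\mu$-integral relies precisely on the smoothness, hence rapid frequency decay, of the time cut-off $\eta$. No cancellation beyond this is required.
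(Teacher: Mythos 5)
Your proof is correct and follows essentially the same route as the paper: both insert a smooth time cutoff so that the space-time Fourier transform becomes $\widehat{\eta}(\tau-\xi^3)\widehat{\phi}(\xi)$, then apply Plancherel in the variable $\mu=\tau-\xi^3$ and conclude from the scale-critical bound $\int_\R|\mu|\,|\widehat{\eta}(\mu)|^2\,d\mu<\infty$. The only difference is that the paper uses the rescaled cutoff $\eta_T(t)=\eta(t/T)$ and lets $T\to\infty$ to recover the statement for the untruncated Airy flow, the uniformity in $T$ being precisely the scale invariance of the weight $|\mu|^{1/2}$ in $L^2_\mu$ that you point out, whereas you keep the cutoff fixed and read the lemma as a statement about the time-localized evolution.
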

\begin{proof}
From Plancherel theorem, we have
$$\Big(\sum_L[L^{1/2}\|Q_L(e^{-t\partial_{xxx}}\phi)\|_{L^2}]^2\Big)^{1/2}\sim \||\tau-\xi^3|^{1/2}\F(e^{-t\partial_{xxx}}\phi)\|_{L^2}.$$
Moreover if we set $\eta_T(t)=\eta(t/T)$ for $T>0$, then
$$\F(\eta_T(t)e^{-t\partial_{xxx}}\phi)(\tau,\xi) = \widehat{\eta_T}(\tau-\xi^3)\widehat{\phi}(\xi).$$
Thus we obtain with the changes of variables $\tau-\xi^3\to \tau '$ and $T\tau'\to\sigma$ that
$$\||\tau-\xi^3|^{1/2}\F(\eta_T(t)e^{-t\partial_{xxx}}\phi)\|_{L^2}\lesssim \|\phi\|_{L^2}\||\tau'|^{1/2}T\widehat{\eta}(T\tau')\|_{L^2_{\tau'}}\lesssim \|\phi\|_{L^2}.$$
Taking the limit $T\to\infty$, this completes the proof.
\end{proof}

\begin{lemma}\label{lem-Sbounds}
\begin{enumerate}
\item For each dyadic $N$, we have
\begin{equation}\label{est-Y0}\|(\partial_t+\partial_{xxx})P_Nu\|_{L^1_tL^2_x}\lesssim \|P_Nu\|_{Y^{0,\frac 12}}.\end{equation}
\item For all $u\in \S^{-1}$,
\begin{equation}\label{est-L2S-1}\|u\|_{L^2_{xt}}\lesssim \|u\|_{\S^{-1}}.\end{equation}
\item For all $u\in \S^0$,
\begin{equation}\label{est-L2l2}\Big(\sum_L[L^{1/2}\|Q_Lu\|_{L^2}]^2\Big)^{1/2}\lesssim \|u\|_{\S^0}.\end{equation}
\end{enumerate}
\end{lemma}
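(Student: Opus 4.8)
The three estimates are of increasing complexity, so I would prove them in the order they are stated, exploiting the fact that each builds naturally on the definition of the spaces and on Lemma 3.1.

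For item (1), the plan is to reduce directly to the defining norm of $Y^{0,\frac12}$. By definition,
$$\|P_Nu\|_{Y^{0,\frac12}} = \cro{N}^0\|\F^{-1}[(i(\tau-\xi^3)+\xi^2+1)\varphi_N\widetilde{u}]\|_{L^1_tL^2_x},$$
since the exponent $b+1/2$ equals $1$ when $b=\frac12$. The operator $(i(\tau-\xi^3)+\xi^2+1)$ corresponds on the Fourier side exactly to $(\partial_t+\partial_{xxx}-\partial_{xx}+I)$. So the right-hand side is $\|(\partial_t+\partial_{xxx}-\partial_{xx}+I)P_Nu\|_{L^1_tL^2_x}$. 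To recover $\|(\partial_t+\partial_{xxx})P_Nu\|_{L^1_tL^2_x}$, I would write $\partial_t+\partial_{xxx} = (\partial_t+\partial_{xxx}-\partial_{xx}+I) + (\partial_{xx}-I)$ and estimate the two pieces by the triangle inequality; the operator $\partial_{xx}-I$ is an $L^2_x$-bounded Fourier multiplier on frequencies $|\xi|\sim N$ (with bound $\lesssim \cro{N}^2$), which gives control by the $Y$-norm (possibly with an $N$-dependent but finite constant, which is acceptable since the estimate is stated for each \emph{fixed} dyadic $N$). The key point is simply matching the symbol $i(\tau-\xi^3)+\xi^2+1$ to the differential operator defining the simplified $Y^{-1,\frac12}$ norm displayed just before the lemma.

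For item (2), I want to show $\|u\|_{L^2_{xt}}\lesssim\|u\|_{\S^{-1}}$ where $\S^{-1}=X^{-1,\frac12,1}+Y^{-1,\frac12}$. Since the norm on a sum space is the infimum over decompositions, it suffices to prove the embedding separately for $X^{-1,\frac12,1}$ and for $Y^{-1,\frac12}$ and then combine. For the $X$-part, I would peel off the Littlewood--Paley pieces: by Plancherel $\|P_NQ_Lu\|_{L^2}$ summed appropriately controls $\|u\|_{L^2}$, and the weights $\cro{N}^{-1}\cro{L+N^2}^{1/2}$ in the $X^{-1,\frac12,1}$ norm dominate the trivial weight needed for $L^2$ (here one uses $\cro{L+N^2}^{1/2}\geq 1$ and that $\ell^1$ in $L$ controls $\ell^2$ in $L$, together with Cauchy--Schwarz/orthogonality in $N$). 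For the $Y$-part, the embedding $L^1_tL^2_x\hookrightarrow$ (a frequency-localized $L^2$ via the smoothing factor) is more delicate; I expect to invert the multiplier $(i(\tau-\xi^3)+\xi^2+1)^{1/2}$ and use that $\cro{L+N^2}^{-1/2}$ is square-summable against the $L^1_tL^2_x$ control. This is the point where I anticipate the \textbf{main obstacle}: controlling an $L^2_{xt}$ norm by an $L^1_tL^2_x$ norm requires genuinely using the smoothing/decay encoded in the weight, not just Hölder, so the argument must carefully exploit square-summability of $\cro{L+N^2}^{-1/2}$ over dyadic $L$ and the $\ell^2_N$ structure.

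For item (3), the target is the Kato-smoothing-type square function bound $\big(\sum_L[L^{1/2}\|Q_Lu\|_{L^2}]^2\big)^{1/2}\lesssim\|u\|_{\S^0}$, and here Lemma 3.1 is clearly the intended engine. Again splitting $\S^0=X^{0,\frac12,1}+Y^{0,\frac12}$, the $X^{0,\frac12,1}$-part is essentially built into the definition: $L^{1/2}\|Q_Lu\|_{L^2}$ is, after summing over $N$, dominated by the $X^{0,\frac12,1}$ norm because $\cro{L+N^2}^{1/2}\gtrsim L^{1/2}$ and the $\ell^1_L$ norm dominates the $\ell^2_L$ norm. The $Y^{0,\frac12}$-part is where Lemma 3.1 enters: one writes an element of $Y^{0,\frac12}$ via its $L^1_tL^2_x$-representation, and by Minkowski (moving $L^1_t$ outside) reduces to estimating the free-evolution profile $e^{-t\partial_{xxx}}\phi$ for a fixed-time slice $\phi$, to which Lemma 3.1 applies directly, giving exactly the $\big(\sum_L[L^{1/2}\|Q_L(\cdot)\|_{L^2}]^2\big)^{1/2}\lesssim\|\phi\|_{L^2}$ bound needed. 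Summing over $N$ in $\ell^2$ closes the estimate. The clean structural reason the lemma holds is that $Y^{s,\frac12}$ is designed so that its elements are, up to the dissipative weight, superpositions of free Airy evolutions, for which Lemma 3.1 is tailor-made.
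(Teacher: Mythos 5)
Your plan for items (1) and (2) follows the same general route as the paper (reduce to the operator $I-\partial_{xx}$, respectively split the sum space and invert the symbol), but it mishandles the point that gives these estimates their content: uniformity in $N$. In item (1) you write that an $N$-dependent constant is ``acceptable since the estimate is stated for each fixed dyadic $N$''. It is not: with a constant $C(N)$ the inequality is an empty consequence of Young's inequality, and it is precisely the uniform version that is used afterwards (it is fed into the proof of item (3) and into the embedding of $\S^0$ into Tao's resolution space, both of which involve summation over $N$). The uniform bound comes from a cancellation your sketch never produces: writing $(I-\partial_{xx})P_Nu$ as a time convolution of $(\partial_t+\partial_{xxx}-\partial_{xx}+I)P_Nu$ with the kernel $\F_t^{-1}\bigl(\varphi_N(\xi)(\xi^2+1)/(i(\tau-\xi^3)+\xi^2+1)\bigr)=C\varphi_N(\xi)(1+\xi^2)e^{-t(1+\xi^2)}\chi_{\R^+}(t)$, Young's inequality in time produces the factor $\|\cro{N}^2e^{-t\cro{N}^2}\chi_{\R^+}(t)\|_{L^1_t}\lesssim 1$: the $\cro{N}^2$ of the multiplier is exactly compensated by the $\cro{N}^{-2}$ decay of the kernel. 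The same kernel computation, now in $L^2_t$ where its norm is $\sim\cro{N}^{-1}$, is how the paper proves item (2); your alternative of ``square-summing $\cro{L+N^2}^{-1/2}$ over dyadic $L$'' does not close, since $\sum_L\cro{L+N^2}^{-1}$ carries (at least) a logarithmic loss in $N$, which is fatal at the endpoint $s=-1$; the weight whose square sum is actually $\sim\cro{N}^{-2}$ is $L^{1/2}\cro{L+N^2}^{-1}$, the extra $L^{1/2}$ coming from the $\tau$-measure of the support of $\varphi_L$ against an $L^\infty_\tau$ bound on the Fourier transform of an $L^1_t$ function.

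The serious gap is in item (3). After passing to the $L^1_tL^2_x$ representation of the $Y^{0,\frac 12}$ part and applying Minkowski, the fixed-$t'$ slices are \emph{not} free evolutions: they are retarded, i.e.\ they carry the sharp cutoff $\chi_{t>t'}$ (and a dissipative factor), and Lemma \ref{lem-Xinfty} does not ``apply directly'' to them. This is not a technicality: the square function $\bigl(\sum_L[L^{1/2}\|Q_L\,\cdot\,\|_{L^2}]^2\bigr)^{1/2}$ of $\chi_{t>t'}e^{-(t-t')\partial_{xxx}}\phi$ diverges, because the jump at $t=t'$ produces a space-time Fourier transform of size $|\tau-\xi^3|^{-1}$, so each dyadic modulation $L$ contributes a comparable amount and the sum over $L$ blows up. Handling this cutoff is exactly where the paper's proof does its work: it first uses (\ref{est-Y0}) and (\ref{est-LiL2}) to reduce matters to the undamped Duhamel term with source $v=(\partial_t+\partial_{xxx})u$, splits $\int_0^t=\int_{-\infty}^t-\int_{-\infty}^0$ --- the second piece being a genuine free evolution of the fixed profile $\int_{-\infty}^0e^{t'\partial_{xxx}}v(t')\,dt'$, and only there is Lemma \ref{lem-Xinfty} invoked --- and then treats the retarded piece by a dedicated time-restriction argument (multiplication by $\eta_T$ and a limiting procedure $T\to\infty$). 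A proof plan that delegates the retarded slice silently to Lemma \ref{lem-Xinfty} misses the central difficulty of this item, and no small modification of that step can succeed, since the slice-wise quantity it relies on is infinite.
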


\begin{proof}
\begin{enumerate}
\item From the definition of $Y^{0,\frac 12}$, the right-hand side of (\ref{est-Y0}) can be rewritten as
$$\|P_Nu\|_{Y^{0,\frac 12}}= \|(\partial_t+\partial_{xxx}-\partial_{xx}+I)P_Nu\|_{L^1_tL^2_x}.$$
Thus, by the triangle inequality, we reduce to show (\ref{est-Y0}) with $\partial_t+\partial_{xxx}$ replaced by $I-\partial_{xx}$.
Using Plancherel theorem as well as Young and H\"{o}lder inequalities, we get
\begin{align*}
&\|(I-\partial_{xx})P_Nu\|_{L^1_tL^2_x}\\  &\quad\lesssim \Big\|\F_t^{-1}\Big(\frac{\xi^2+1}{i(\tau-\xi^3)+\xi^2+1}(i(\tau-\xi^3)+\xi^2+1)\varphi_N\widetilde{u}\Big)\Big\|_{L^1_tL^2_\xi}.
\end{align*}
In the sequel, it will be convenient to write $\varphi_N $ for $ \varphi_{N/2} +\varphi_N +\varphi_{2N} $.
With this slight abuse of notation, we obtain
\begin{align*}
&\|(I-\partial_{xx})P_Nu\|_{L^1_tL^2_x}\\
&\quad\lesssim \Big\|\F_t^{-1}\Big(\frac{\varphi_N(\xi)(\xi^2+1)}{i(\tau-\xi^3)+\xi^2+1}\Big)\Big\|_{L^1_tL^\infty_\xi}\|(\partial_t+\partial_{xxx}-\partial_{xx}+I)P_Nu\|_{L^1_tL^2_x}.
\end{align*}
On the other hand, a direct computation yields
$$\Big|\F_t^{-1}\Big(\frac{\varphi_N(\xi)(\xi^2+1)}{i(\tau-\xi^3)+\xi^2+1}\Big)\Big| = C\varphi_N(\xi)(1+\xi^2)e^{-t(1+\xi^2)}\chi_{\R^+}(t)$$
so that
$$\Big\|\F_t^{-1}\Big(\frac{\varphi_N(\xi)(\xi^2+1)}{i(\tau-\xi^3)+\xi^2+1}\Big)\Big\|_{L^1_tL^\infty_\xi} \lesssim \|\cro{N}^2e^{-t\cro{N}^2}\chi_{\R^+}(t)\|_{L^1_t}\lesssim 1,$$
and the claim follows.
\item We show that for any fixed dyadic $N$, we have \begin{equation}\label{est-PNuS-1}\|P_Nu\|_{L^2}\lesssim \|P_Nu\|_{\S^{-1}}.\end{equation}
Estimate (\ref{est-L2S-1}) then follows after square-summing.
Observe that (\ref{est-PNuS-1}) follows immediately from the estimate $\cro{N}^{-1}\cro{L+N^2}^{1/2}\gtrsim 1$ if the right-hand side is replaced by $\|P_Nu\|_{X^{-1,\frac 12,1}}$, so it
suffices to prove (\ref{est-PNuS-1}) with $\|P_Nu\|_{Y^{-1,\frac 12}}$ in the right-hand side.
But applying again Young and H\"{o}lder's inequalities, this is easily verified:
\begin{align*}
\|P_Nu\|_{L^2} &= \Big\|\F_t^{-1}\Big(\frac{1}{i(\tau-\xi^3)+\xi^2+1}(i(\tau-\xi^3)+\xi^2+1)\varphi_N\widetilde{u}\Big)\Big\|_{L^2_{t\xi}}\\
&\lesssim \Big\|\F_t^{-1}\Big(\frac{\varphi_N(\xi)}{i(\tau-\xi^3)+\xi^2+1}\Big)\Big\|_{L^2_tL^\infty_x}\|P_Nu\|_{Y^{0,\frac 12}}\\
&\lesssim \|e^{-t\cro{N}^2}\chi_{\R^+}(t)\|_{L^2_t}\|P_Nu\|_{Y^{0,\frac 12}}\\
&\lesssim \cro{N}^{-1}\|P_Nu\|_{Y^{0,\frac 12}}\lesssim \|P_Nu\|_{Y^{-1,\frac 12}}.
\end{align*}

\item First it is clear from definitions that $\Big(\sum_L[L^{1/2}\|Q_Lu\|_{L^2}]^2\Big)^{1/2}\lesssim \|u\|_{X^{0,\frac 12,1}}$.

Setting now $v=(\partial_t+\partial_{xxx})u$, we see that $u$ can be rewritten as
$$u(t)=e^{-t\partial_{xxx}}u(0)+\int_0^te^{-(t-t')\partial_{xxx}}v(t')dt'.$$
By virtue of Lemma \ref{lem-Xinfty}, we have
$$\Big(\sum_L[L^{1/2}\|Q_Le^{-t\partial_{xxx}}u(0)\|_{L^2}]^2\Big)^{1/2}\lesssim \|u(0)\|_{L^2}\lesssim \|u\|_{L^\infty_tL^2_x}.$$
Moreover, we get as previously
\begin{equation}\label{est-LiL2}\|u\|_{L^\infty_tL^2_x}\lesssim \Big\|\F_t^{-1}\Big(\frac{1}{i(\tau-\xi^3)+\xi^2+1}\Big)\Big\|_{L^\infty_{t\xi}}\|u\|_{Y^{0,\frac 12}}
\lesssim \|u\|_{Y^{0,\frac 12}}.\end{equation}
Thanks to estimate (\ref{est-Y0}), it remains to show that
\begin{equation}\label{est-vL1L2}\Big(\sum_L\Big[L^{1/2}\Big\|Q_L\int_0^te^{-(t-t')\partial_{xxx}}v(t')dt'\Big\|_{L^2}\Big]^2\Big)^{1/2}\lesssim \|v\|_{L^1_tL^2_x}.\end{equation}
In order to prove this, we split the integral $\int_0^t = \int_{-\infty}^t-\int_{-\infty}^0$. By Lemma \ref{lem-Xinfty}, the contribution with integrand on $(-\infty,0)$ is bounded by
$$
\lesssim \Big\|\int_{-\infty}^0 e^{t'\partial_{xxx}}v(t')dt'\Big\|_{L^2_x}
\lesssim \|v\|_{L^1_tL^2_x}.
$$
For the last term, we reduce by Minkowski to show that
$$\Big(\sum_L[L^{1/2}\|Q_L(\chi_{t>t'}e^{-(t-t')\partial_{xxx}}v(t'))\|_{L^2_{tx}}]^2\Big)^{1/2}\lesssim \|v(t')\|_{L^2_x}.$$
This can be proved by a time-restriction argument. Indeed, for any $T>0$, we have
\begin{align*}
&\Big(\sum_L[L^{1/2}\|Q_L(\eta_T(t)\chi_{t>t'}e^{-(t-t')\partial_{xxx}}v(t'))\|_{L^2}]^2\Big)^{1/2}\\
&\quad \lesssim \||\tau|^{1/2} \widehat{v}(t')\F_t(\eta_T(t)\chi_{t>t'})(\tau)\|_{L^2}\\
&\quad\lesssim \|v(t')\|_{L^2}\||\tau|^{1/2}\F_t(\eta(t)\chi_{tT>t'})\|_{L^2}\\
&\quad\lesssim \|v(t')\|_{L^2}.
\end{align*}
We conclude by passing to the limit $T\to\infty$.
\end{enumerate}
\end{proof}

Now we state a general and classical result which ensures that our resolution space is well compatible with dispersive properties of the Airy equation.
Actually, it is a direct consequence of Lemma 4.1 in \cite{Tao} together with the fact that the resolution space $\S_0$ used by Tao to solve 4-KdV contains our space $\S^0$ thanks to estimate (\ref{est-Y0})

\begin{lemma}[Extension lemma]
Let $Z$ be a Banach space of functions on $\R\times\R$ with the property that
$$\|g(t)u(t,x)\|_Z \lesssim \|g\|_{L^\infty_t}\|u(t,x)\|_Z$$
holds for any $u\in Z$ and $g\in L^\infty_t(\R)$.
Let $T$ be a spacial linear operator for which one has the estimate
$$\|T(e^{-t\partial_{xxx}}P_N\phi)\|_Z\lesssim \|P_N\phi\|_{L^2}$$
for some dyadic $N$ and for all $\phi$.
Then one has the embedding
$$\|T(P_Nu)\|_Z\lesssim \|P_Nu\|_{\S^0}.$$
\end{lemma}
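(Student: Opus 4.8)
The plan is to exploit the subadditivity of the $Z$-norm together with the sum structure $\S^0=X^{0,\frac12,1}+Y^{0,\frac12}$, reducing the claim to two independent estimates: one for $T$ acting on the $X^{0,\frac12,1}$ component of $P_Nu$ and one for its $Y^{0,\frac12}$ component. Given any decomposition $P_Nu=u_1+u_2$ with $u_1\in X^{0,\frac12,1}$ and $u_2\in Y^{0,\frac12}$ realizing the defining infimum up to a factor $2$, I would first apply $P_N$ once more: since $P_N$ is bounded on both the $X^{0,\frac12,1}$ and the $Y^{0,\frac12}$ norms and $P_Nu=P_Nu_1+P_Nu_2$, I may replace each $u_i$ by $P_Nu_i$ and thereby assume that both pieces are localized at frequency $N$. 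It then suffices to prove $\|Tu_1\|_Z\lesssim\|u_1\|_{X^{0,\frac12,1}}$ and $\|Tu_2\|_Z\lesssim\|u_2\|_{Y^{0,\frac12}}$ for frequency-$N$ functions, and to conclude by taking the infimum over all such decompositions.

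For the $X^{0,\frac12,1}$ piece I would represent $u_1$ as a continuous superposition of modulated free Airy evolutions indexed by the modulation variable $\sigma=\tau-\xi^3$. Defining $\phi_\sigma$ by $\widehat{\phi_\sigma}(\xi)=\widetilde{u_1}(\xi^3+\sigma,\xi)$, a change of variables yields
$$u_1(t,x)=\int_\R e^{it\sigma}\,(e^{-t\partial_{xxx}}\phi_\sigma)(x)\,d\sigma,\qquad \phi_\sigma=P_N\phi_\sigma.$$
Since $T$ is a purely spatial operator it commutes with multiplication by $e^{it\sigma}$ and with the $\sigma$-integration, so by Minkowski, the $Z$-multiplier hypothesis applied to $g(t)=e^{it\sigma}$ (of unit $L^\infty_t$-norm), and the hypothesis on $T$, I obtain
$$\|Tu_1\|_Z\le\int_\R\|e^{it\sigma}T(e^{-t\partial_{xxx}}P_N\phi_\sigma)\|_Z\,d\sigma\lesssim\int_\R\|P_N\phi_\sigma\|_{L^2}\,d\sigma.$$
Finally $\|\phi_\sigma\|_{L^2}^2=\int|\widetilde{u_1}(\xi^3+\sigma,\xi)|^2\,d\xi$; decomposing the $\sigma$-integral into dyadic blocks $|\sigma|\sim L$ and using Cauchy--Schwarz on each block turns $\int_\R\|\phi_\sigma\|_{L^2}\,d\sigma$ into $\sum_L L^{1/2}\|Q_Lu_1\|_{L^2}\le\|u_1\|_{X^{0,\frac12,1}}$. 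The $\ell^1$ summation over $L$ built into the Besov index $q=1$ is exactly what averts a logarithmic loss here.

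The $Y^{0,\frac12}$ piece is the delicate one, and I would follow the scheme of the proof of \re{est-L2l2}. Setting $v=(\partial_t+\partial_{xxx})u_2$, estimate \re{est-Y0} gives $\|v\|_{L^1_tL^2_x}\lesssim\|u_2\|_{Y^{0,\frac12}}$, while \re{est-LiL2} yields $u_2\in C_tL^2_x$, so that $u_2(0)$ is well defined and the Duhamel representation
$$u_2(t)=e^{-t\partial_{xxx}}u_2(0)+\int_0^t e^{-(t-t')\partial_{xxx}}v(t')\,dt'$$
holds. The free term is handled directly by the hypothesis on $T$ together with $\|u_2(0)\|_{L^2}\lesssim\|u_2\|_{L^\infty_tL^2_x}\lesssim\|u_2\|_{Y^{0,\frac12}}$. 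For the Duhamel term I would split $\int_0^t=\int_{-\infty}^t-\int_{-\infty}^0$; the $\int_{-\infty}^0$ contribution is again a free evolution, with data of $L^2$-norm at most $\|v\|_{L^1_tL^2_x}$, while the $\int_{-\infty}^t$ contribution is written as $\int_\R\chi_{t>t'}\,e^{-t\partial_{xxx}}(e^{t'\partial_{xxx}}v(t'))\,dt'$. Commuting $T$ inside, applying Minkowski in $t'$, and using that for each fixed $t'$ the cutoff $\chi_{t>t'}$ is an $L^\infty_t$ multiplier of unit norm (absorbed by the $Z$-hypothesis), each slice is bounded by $\|T(e^{-t\partial_{xxx}}P_Nv(t'))\|_Z\lesssim\|v(t')\|_{L^2}$, whose $t'$-integral is $\|v\|_{L^1_tL^2_x}$.

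The main obstacle I anticipate is the bookkeeping in the $Y$ estimate: one must first secure that functions of $Y^{0,\frac12}$ are continuous in time with values in $L^2$, so that the Duhamel formula and the datum $u_2(0)$ make sense, and then arrange all the time localizations (the cutoff $\chi_{t>t'}$ and the splitting of $\int_0^t$) so that they enter only as $L^\infty_t$ multipliers --- which is precisely why the hypothesis on $Z$ is formulated as a multiplier bound rather than a stronger continuity property. The $X$ estimate, by contrast, is routine once the superposition representation is in place, modulo the Besov $\ell^1$-in-$L$ device. Alternatively, one can bypass the direct argument by observing that \re{est-Y0} embeds $\S^0$ into the resolution space $\S_0$ of \cite{Tao} and quoting Lemma 4.1 there.
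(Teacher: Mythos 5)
Your proposal is correct, but it takes a genuinely different route from the paper: the paper's entire proof is a one-sentence citation, namely that by \re{est-Y0} (together with the $L^\infty_t L^2_x$ bound \re{est-LiL2}) the space $\S^0$ embeds into the resolution space $\S_0$ that Tao uses for the quartic gKdV equation, so that Lemma 4.1 of \cite{Tao} applies directly --- which is exactly the shortcut you mention in your last sentence. What you have done instead is unpack that citation into a self-contained argument, and the two halves of your proof are precisely the mechanism behind Tao's lemma: the $X^{0,\frac12,1}$ atoms are handled by the superposition $u_1=\int_\R e^{it\sigma}e^{-t\partial_{xxx}}\phi_\sigma\,d\sigma$ of modulated free waves, with Cauchy--Schwarz on dyadic modulation blocks turning $\int_\R\|\phi_\sigma\|_{L^2}\,d\sigma$ into $\sum_L L^{1/2}\|Q_Lu_1\|_{L^2}$ (and you are right that the Besov index $q=1$ is what makes this sum finite --- with $q=2$ the transference argument fails); the $Y^{0,\frac12}$ atoms are handled by the Duhamel representation, with \re{est-Y0} and \re{est-LiL2} controlling the forcing and the datum, and with the sharp cutoffs $\chi_{t>t'}$ entering only as unit-norm $L^\infty_t$ multipliers absorbed by the hypothesis on $Z$ --- this is the same splitting $\int_0^t=\int_{-\infty}^t-\int_{-\infty}^0$ the paper itself uses to prove \re{est-L2l2}. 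The trade-off is clear: the paper's version is economical but leaves the reader to verify the embedding and the compatibility of the two frameworks, while yours makes visible exactly where each hypothesis (spatiality of $T$, the multiplier property of $Z$, the frequency localization) is consumed. One point to tighten: when you ``replace $u_i$ by $P_Nu_i$'' you should really use the fattened projection $\widetilde{P}_N=P_{N/2}+P_N+P_{2N}$ (the paper's own abuse of notation $\varphi_N\mapsto\varphi_{N/2}+\varphi_N+\varphi_{2N}$), since $P_N^2\neq P_N$; correspondingly the free-evolution hypothesis on $T$ must be read at this slightly fattened localization, which is harmless in the intended application (the Corollary), where the Airy-group and Kato-smoothing estimates hold uniformly over all dyadic frequencies.
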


Combined with the unitary of the Airy group in $L^2$ and the sharp Kato smoothing effect
\begin{equation}\label{kato}\|\partial_xe^{-t\partial_{xxx}}\phi\|_{L^\infty_xL^2_t}\lesssim \|\phi\|_{L^2},\quad\forall\phi\in L^2,\end{equation}
we deduce the following result.
\begin{corollary} For any $u$, we have\footnotemark[1]
\begin{equation}\label{est-Lit}\|u\|_{L^\infty_tH^{-1}_x}\lesssim \|u\|_{\S^{-1}},\end{equation}
\begin{equation}\label{est-smooth}\|P_Nu\|_{L^\infty_xL^2_t}\lesssim N^{-1}\|P_Nu\|_{\S^{0}},\end{equation}
provided the right-hand side is finite. In particular, $\S^{-1} \hookrightarrow L^\infty_t H^{-1} $.
\end{corollary}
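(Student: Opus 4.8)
The plan is to derive both estimates from the Extension lemma applied frequency block by frequency block, and then to reassemble the dyadic pieces by Littlewood--Paley theory. Throughout I use that for a single block the space norms scale trivially in the frequency, namely $\|P_Nu\|_{\S^s}\sim\cro{N}^s\|P_Nu\|_{\S^0}$ (the weight $\cro{N}^s$ factors out of both the $X^{s,1/2,1}$ and the $Y^{s,1/2}$ norms since only the $N$-th block survives), and that the $\ell^2_N$ structure of these norms gives $\sum_N\|P_Nu\|_{\S^{-1}}^2\lesssim\|u\|_{\S^{-1}}^2$; the latter follows by splitting any admissible decomposition $u=u_1+u_2$ with $u_1\in X^{-1,1/2,1}$, $u_2\in Y^{-1,1/2}$, applying $P_N$ to both summands, and using the triangle inequality together with the built-in square-summation in $N$.

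For (\ref{est-Lit}) I would take $Z=L^\infty_tL^2_x$. The multiplication hypothesis $\|g(t)u\|_{Z}\lesssim\|g\|_{L^\infty_t}\|u\|_Z$ of the Extension lemma is immediate, and the required base estimate $\|e^{-t\partial_{xxx}}P_N\phi\|_{L^\infty_tL^2_x}\lesssim\|P_N\phi\|_{L^2}$ is nothing but the unitarity of the Airy group (with equality, for each fixed $t$). The lemma (with $T=\mathrm{Id}$) then yields $\|P_Nu\|_{L^\infty_tL^2_x}\lesssim\|P_Nu\|_{\S^0}$ for every dyadic $N$. Since for each fixed $t$ one has $\|u(t)\|_{H^{-1}_x}^2\sim\sum_N\cro{N}^{-2}\|P_Nu(t)\|_{L^2_x}^2\le\sum_N\cro{N}^{-2}\|P_Nu\|_{L^\infty_tL^2_x}^2$, taking the supremum in $t$ (the right-hand side being $t$-independent) and combining with the block bound and the scaling $\cro{N}^{-1}\|P_Nu\|_{\S^0}\sim\|P_Nu\|_{\S^{-1}}$ gives $\|u\|_{L^\infty_tH^{-1}_x}^2\lesssim\sum_N\|P_Nu\|_{\S^{-1}}^2\lesssim\|u\|_{\S^{-1}}^2$, which is (\ref{est-Lit}); the embedding $\S^{-1}\hookrightarrow L^\infty_tH^{-1}$ is then immediate.

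For (\ref{est-smooth}) I would instead take $Z=L^\infty_xL^2_t$ and $T=\partial_x$. Again the multiplication property holds trivially (estimate $\|g(t)u\|_{L^2_t}\le\|g\|_{L^\infty_t}\|u\|_{L^2_t}$ pointwise in $x$, then take $\sup_x$), while the base estimate $\|\partial_x e^{-t\partial_{xxx}}P_N\phi\|_{L^\infty_xL^2_t}\lesssim\|P_N\phi\|_{L^2}$ is exactly the sharp Kato smoothing effect (\ref{kato}). The Extension lemma thus gives $\|\partial_xP_Nu\|_{L^\infty_xL^2_t}\lesssim\|P_Nu\|_{\S^0}$. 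It remains to trade the derivative for the factor $N^{-1}$: writing $P_Nu=\partial_x^{-1}\widetilde{P}_N(\partial_xP_Nu)$ with $\widetilde{P}_N$ a fattened projector equal to $1$ on $\supp\varphi_N$, the operator $m(D_x)=\partial_x^{-1}\widetilde{P}_N$ is a Fourier multiplier in $x$ whose symbol is of the form $N^{-1}m_0(\xi/N)$; its kernel $\check m$ satisfies $\|\check m\|_{L^1_x}\lesssim N^{-1}$ by scaling, so by Minkowski's integral inequality $\|m(D_x)f\|_{L^\infty_xL^2_t}\le\|\check m\|_{L^1_x}\|f\|_{L^\infty_xL^2_t}\lesssim N^{-1}\|f\|_{L^\infty_xL^2_t}$. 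Applying this with $f=\partial_xP_Nu$ gives $\|P_Nu\|_{L^\infty_xL^2_t}\lesssim N^{-1}\|P_Nu\|_{\S^0}$, which is (\ref{est-smooth}).

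The only genuinely non-formal point, and hence the step I expect to require the most care, is this last conversion of $\partial_x$ into the explicit gain $N^{-1}$ while remaining in the anisotropic norm $L^\infty_xL^2_t$: one must check that the inverse-derivative multiplier acts boundedly on the mixed-norm space with the expected $N^{-1}$ gain, which is why the kernel estimate $\|\check m\|_{L^1_x}\lesssim N^{-1}$ together with Minkowski's inequality (rather than a plain Plancherel argument, which is unavailable in $L^\infty_x$) is needed. Everything else is a routine combination of the Extension lemma, $L^2$-unitarity, (\ref{kato}), and Littlewood--Paley bookkeeping.
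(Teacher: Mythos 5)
Your proof is correct and follows essentially the same route as the paper: the corollary there is obtained exactly by feeding the Airy-group unitarity (for \re{est-Lit}) and the sharp Kato smoothing effect \re{kato} (for \re{est-smooth}) into the Extension lemma blockwise, with the Littlewood--Paley reassembly and the frequency-localized inversion of $\partial_x$ left implicit. Your only deviation --- applying the lemma with $T=\partial_x$ and undoing the derivative afterwards via the $L^1_x$-kernel/Minkowski bound in $L^\infty_xL^2_t$, rather than absorbing the $N^{-1}$ gain into the base estimate on the data --- is a cosmetic reordering of the same argument.
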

\footnotetext[1]{Note that (\ref{est-Lit}) can also be deduced from estimate (\ref{est-LiL2}).}

\section{Linear estimates}\label{sec-lin}
In this section we prove  linear estimates related to the operator $W$ as well as to the extension of the Duhamel operator introduced in \re{eq-int}.

 At this this stage let us give some explanations on our choice of this extension. Let us keep in mind that this extension has to be compatible with linear estimates in  both norms 
  $X^{s,1/2,1} $ and $ Y^{s,1/2} $. First, since $ X^{s,1/2,1} $ is a Besov in  time space we are not allowed to simply multiply the Duhamel term 
   by $ \chi_{\R^+}(t) $. Second,  in order to prove the desired linear estimate in $ Y^{s,1/2} $ the strategy is to use that the Duhamel term satisfies a forced KdV-Burgers equation. Unfortunately,  it turns out that the extension introduced in \cite{MR2}, that makes the calculus simple, does not satisfy such PDE for negative time. The new extension that we introduce in this work has the properties to satisfy some forced PDE related to KdV-Burgers for negative times (see \re{eses}) and to be compatible with linear 
     estimates in $X^{s,1/2,1} $. However the proof  is now a little more complicated even if it follows the same lines than the one of Propositions 2.3   in \cite{MR2}, see also Proposition 4.4, \cite{GuoWang}.
     
   The following lemma is a dyadic version of Proposition 2.1 in \cite{MR2}.
\begin{proposition}\label{lem-linhom}For all $\phi\in H^{-1}(\R)$, we have
\begin{equation}\label{est-lin}\|\eta(t) W(t)\phi\|_{\S^{-1}}\lesssim \|\phi\|_{H^{-1}}.\end{equation}
\end{proposition}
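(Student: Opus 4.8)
The plan is to place $\eta(t)W(t)\phi$ entirely in the summand $Y^{-1,\frac12}$ of $\S^{-1}=X^{-1,\frac12,1}+Y^{-1,\frac12}$. Since $\|v\|_{\S^{-1}}\le\|v\|_{Y^{-1,\frac12}}$ by definition of the sum norm, and since the $Y^{-1,\frac12}$ norm admits the equivalent description recorded in Section \ref{sec-space}, the estimate \re{est-lin} reduces to
\begin{equation*}
\Big(\sum_N\cro{N}^{-2}\|(\partial_t+\partial_{xxx}-\partial_{xx}+I)P_N(\eta(t)W(t)\phi)\|_{L^1_tL^2_x}^2\Big)^{1/2}\lesssim\|\phi\|_{H^{-1}}.
\end{equation*}
The advantage of working in $Y^{-1,\frac12}$ is that its defining operator is exactly the linear part of \re{KdVB}, so the problem becomes one of computing how far $\eta(t)W(t)\phi$ is from solving the homogeneous equation.

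Writing $\L:=\partial_t+\partial_{xxx}-\partial_{xx}+I$ and expanding by Leibniz, $\L(\eta W\phi)=\eta'(t)W(t)\phi+\eta(t)\,\L(W(t)\phi)$. By \re{trtr} the space Fourier symbol of $W(t)$ is $e^{it\xi^3-|t|\xi^2}$, and since $t\mapsto e^{-|t|\xi^2}$ is continuous its distributional time derivative carries no Dirac mass at $t=0$ and equals $-\sgn(t)\xi^2e^{-|t|\xi^2}$. Hence the two cubic (dispersive) contributions cancel and
\begin{equation*}
\F_x(\L W(t)\phi)(\xi)=\bigl(-\sgn(t)\xi^2+\xi^2+1\bigr)e^{it\xi^3-|t|\xi^2}\hat\phi(\xi),
\end{equation*}
which is simply $\F_x(S(t)\phi)(\xi)$ for $t>0$ (in agreement with \re{trtr2}) and $(2\xi^2+1)e^{it\xi^3+t\xi^2}\hat\phi(\xi)$ for $t<0$. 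It then remains to bound, for each dyadic $N$, the three contributions $\eta'W\phi$, the $t>0$ term $\eta(t)S(t)\phi$, and the $t<0$ term, in $L^1_tL^2_x$ by $\lesssim\|P_N\phi\|_{L^2}$.

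The first two are immediate: using $e^{-|t|\xi^2}\le 1$ on the relevant half-line, $\|\eta'\|_{L^1_t}+\|\eta\|_{L^1_t}\lesssim 1$, and Plancherel in $x$, each is bounded by $\lesssim\|P_N\phi\|_{L^2}$. The heart of the matter is the $t<0$ term, where the multiplier $2\xi^2+1\sim\cro{N}^2$ is large; this is precisely where the dissipation built into the extension for negative times is decisive. On the support of $\varphi_N$ one has $e^{t\xi^2}\le e^{tN^2/4}$ for $t<0$, so (for $N\gtrsim 1$, the case $N\lesssim 1$ being trivial)
\begin{equation*}
\int_{-2}^{0}\|(2\xi^2+1)e^{t\xi^2}\varphi_N\hat\phi\|_{L^2_\xi}\,dt\lesssim\cro{N}^2\Big(\int_{-2}^{0}e^{tN^2/4}\,dt\Big)\|P_N\phi\|_{L^2}\lesssim\|P_N\phi\|_{L^2},
\end{equation*}
the time integral producing a factor $\sim\cro{N}^{-2}$ that exactly cancels the $\cro{N}^2$ coming from the symbol. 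Square-summing $\cro{N}^{-2}\|P_N\phi\|_{L^2}^2$ over $N$ then yields $\|\phi\|_{H^{-1}}^2$ and closes the estimate. The only nontrivial point, and the main obstacle, is exactly this negative-time contribution: it is controllable solely because the extension $t\mapsto W(t,t)$ dissipates for $t<0$ as well (its symbol is $e^{-|t|\xi^2}$, not $e^{-t\xi^2}$), which is what guarantees the integrable-in-time decay needed to absorb the growth of $2\xi^2+1$.
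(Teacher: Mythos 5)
Your proof is correct, and it takes a genuinely different route from the paper. The paper bounds the left-hand side of \re{est-lin} by the $X^{-1,\frac12,1}$-norm: after Plancherel this reduces to the estimate \re{est-lin1} on $\sum_L\cro{L+N^2}^{1/2}\|\varphi_N(\xi)P_L(\eta(t)e^{-|t|\xi^2})\|_{L^\infty_\xi L^2_t}$, which is then proved by splitting $L\lessgtr \cro{N}^2$, a paraproduct decomposition, Schur's test, and the endpoint Besov fact $e^{-|t|}\in\dot{B}^{1/2}_{2,1}$. You instead place the free evolution entirely in the $Y^{-1,\frac12}$-component, exploiting that $(\partial_t+\partial_{xxx}-\partial_{xx}+I)(\eta(t)W(t)\phi)$ is explicitly computable: for $t>0$ only the harmless zeroth-order term survives (one recovers $\eta(t)S(t)\phi$ plus $\eta'(t)W(t)\phi$), while for $t<0$ the large symbol $2\xi^2+1\sim\cro{N}^2$ is exactly compensated by the time-integrable decay $e^{tN^2/4}$ coming from the two-sided dissipation of $W(t,t)$; your remark that $e^{-|t|\xi^2}$ is continuous at $t=0$, so the distributional time derivative carries no Dirac mass, is the right justification for the Leibniz computation. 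Your argument is shorter and more elementary, and it isolates precisely why the extension was built with $e^{-|t|\xi^2}$ rather than $e^{-t\xi^2}$ -- indeed it is the same mechanism the authors use for the Duhamel operator in the $Y$-part of Proposition \ref{linearL} (see \re{eses} and the energy argument following it). What the paper's longer route buys is twofold: it shows the stronger fact that the free evolution lies in the Bourgain--Besov component $X^{-1,\frac12,1}$ alone (yours shows membership in $Y^{-1,\frac12}$ alone; either suffices since $\|\cdot\|_{\S^{-1}}\le\min(\|\cdot\|_{X^{-1,\frac12,1}},\|\cdot\|_{Y^{-1,\frac12}})$ by definition of the sum norm), and, more importantly, the intermediate estimate \re{est-lin1} is not a throwaway: it is reused twice in the proof of Lemma \ref{lem-kxi} (the contributions of $II$ and $IV$), i.e.\ in the inhomogeneous $X^{-1,\frac12,1}$ estimate, so the modulation-space analysis your proof avoids must still be carried out elsewhere in the paper.
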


\begin{proof} We bound the left-hand side in (\ref{est-lin}) by the $X^{-1,\frac 12,1}$-norm of  $\eta(t)W(t)\phi$. After square-summing in $N$, we may reduce to prove
\begin{equation}\label{est-lindy}\sum_L\cro{L+N^2}^{1/2}\|P_NQ_L(\eta(t)W(t)\phi)\|_{L^2_{xt}}\lesssim \|P_N\phi\|_{L^2}\end{equation} for each dyadic $N$. Using Plancherel, we obtain
\begin{align*}
&\sum_L\cro{L+N^2}^{1/2}\|P_NQ_L(\eta(t)W(t)\phi)\|_{L^2_{xt}}\\ &\quad\lesssim \sum_L\cro{L+N^2}^{1/2}\|\varphi_N(\xi)\varphi_L(\tau)\widehat{\phi}(\xi)
\F_t(\eta(t)e^{-|t|\xi^2})(\tau)\|_{L^2_{\tau\xi}}\\
&\quad \lesssim \|P_N\phi\|_{L^2}\sum_L\cro{L+N^2}^{1/2}\|\varphi_N(\xi)P_L(\eta(t)e^{-|t|\xi^2})\|_{L^\infty_\xi L^2_t}.
\end{align*}
Hence it remains to show that
\begin{equation}\label{est-lin1}\sum_L\cro{L+N^2}^{1/2}\|\varphi_N(\xi)P_L(\eta(t)e^{-|t|\xi^2})\|_{L^\infty_\xi L^2_t}\lesssim 1.\end{equation}
We split the summand into $L\leq \cro{N}^2$ and $L\geq \cro{N}^2$. In the former case, we get by Bernstein
\begin{multline*}
\sum_{L\leq \cro{N}^2}\cro{L+N^2}^{1/2}\|\varphi_N(\xi)P_L(\eta(t)e^{-|t|\xi^2})\|_{L^\infty_\xi L^2_t} \\ 
\lesssim \sum_{L\leq \cro{N}^2}\cro{N}L^{1/2}\sup_{|\xi|\sim N}\|\eta(t)e^{-|t|\xi^2}\|_{ L^1_t}
\end{multline*}
Also, one can bound $\|\eta(t)e^{-|t|\xi^2}\|_{L^1}$ either by $\|\eta\|_{L^1}$ or by $\|e^{-|t|\xi^2}\|_{L^1_t}\sim |\xi|^{-2}$. It follows that
$$
\sum_{L\leq \cro{N}^2}\cro{L+N^2}^{1/2}\|\varphi_N(\xi)P_L(\eta(t)e^{-|t|\xi^2})\|_{L^\infty_\xi L^2_t} \lesssim \cro{N}^2 \min(1, N^{-2})\lesssim 1.
$$ 
Now we deal with the case $L\geq \cro{N}^2$. A standard paraproduct rearrangement allows us to write
\begin{align*}
P_L(\eta(t)e^{-|t|\xi^2}) &= P_L\Big(\sum_{M\gtrsim L}(P_M\eta(t) P_{\lesssim M}e^{-|t|\xi^2}+P_{\lesssim M}\eta(t) P_M e^{-|t|\xi^2}\Big)\\
& = P_L(I)+P_L(II).
\end{align*}
Using the Schur's test, the term $P_L(I)$ is directly bounded by
\begin{align*}
&\sum_{L\geq \cro{N}^2}\cro{L+N^2}^{1/2}\|\varphi_NP_L(I)\|_{L^\infty_\xi L^2_t}\\ &\quad \lesssim \sum_L L^{1/2}\sum_{M\gtrsim L}\|\varphi_NP_M\eta(t)\|_{L^\infty_\xi L^2_t}
\|\varphi_NP_{\lesssim M}e^{-|t|\xi^2}\|_{L^\infty_{\xi t}}\\
&\quad \lesssim \sum_M M^{1/2}\|P_M\eta\|_{L^2_t}\lesssim 1.
\end{align*}
Similarly for $P_L(II)$, we have
\begin{align*}
&\sum_{L\geq \cro{N}^2}\cro{L+N^2}^{1/2}\|\varphi_NP_L(II)\|_{L^\infty_\xi L^2_t}\\ 
&\quad \lesssim \sum_L L^{1/2}\sum_{M\gtrsim L}\|\varphi_NP_{\lesssim M}\eta(t)\|_{L^\infty_{\xi t}}
\|\varphi_NP_M e^{-|t|\xi^2}\|_{L^\infty_\xi L^2_t}\\
&\quad \lesssim \sum_M M^{1/2}\|\varphi_N P_M e^{-|t|\xi^2}\|_{L^\infty_\xi L^2_t}.
\end{align*}
Moreover, it is not too hard to check that if $|\xi|\sim N$, then $\|P_Me^{-|t|\xi^2}\|_{L^2_t}\lesssim \|P_Me^{-|t|N^2}\|_{L^2_t}$, thus
$$\sum_{L\geq \cro{N}^2}\cro{L+N^2}^{1/2}\|\varphi_NP_L(II)\|_{L^\infty_\xi L^2_t} \lesssim \sum_M M^{1/2}\|P_M e^{-|t|N^2}\|_{L^2_t}\lesssim 1,$$
where we used the fact that the Besov space $\dot{B}^{1/2}_{2,1}$ has a scaling invariance and $e^{-|t|}\in \dot{B}^{1/2}_{2,1}$.
\end{proof}

\begin{lemma}\label{lem-kxi}
For $w\in\S(\R^2)$, consider $k_\xi$ defined on $\R$ by
$$k_\xi(t) = \eta(t)\varphi_N(\xi)\int_\R\frac{e^{it\tau}e^{(t-|t|)\xi^2}-e^{-|t|\xi^2}}{i\tau+\xi^2}\widetilde{w}(\tau)d\tau.$$
Then, for all $\xi\in\R$, it holds 
$$\sum_L\cro{L+N^2}^{1/2}\|P_L k_\xi\|_{L^2_t} \lesssim \sum_L\cro{L+N^2}^{-1/2}\|\varphi_L(\tau)\varphi_N(\xi)\widetilde{w}\|_{L^2_\tau}.$$
\end{lemma}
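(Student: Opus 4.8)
The plan is to reduce everything to a one–dimensional (in time) Fourier computation at the fixed parameter $\xi$, and then run an almost-orthogonality argument between the output modulation $L$ and the modulation of the forcing. By Plancherel in $t$, $\|P_Lk_\xi\|_{L^2_t}=\|\varphi_L(\tau)\F_t(k_\xi)(\tau)\|_{L^2_\tau}$, so the target becomes an estimate on $\F_t(k_\xi)$. I would first record the elementary fact that $|i\tau'+\xi^2|\sim\cro{L_1+N^2}$ on the support of $\varphi_{L_1}(\tau')\varphi_N(\xi)$; this shows that the right-hand side is comparable to $\sum_{L_1}\cro{L_1+N^2}^{1/2}\|\varphi_{L_1}g\|_{L^2}$, where $g(\tau')=\varphi_N(\xi)\widetilde w(\tau')/(i\tau'+\xi^2)$, and it is this reformulated bound that I would prove.

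Next I would decompose the forcing dyadically in its own frequency, $\widetilde w=\sum_{L_1}\varphi_{L_1}\widetilde w$, and correspondingly $k_\xi=\sum_{L_1}k_\xi^{L_1}$. Using the Duhamel identity $\frac{e^{it\tau'}-e^{-t\xi^2}}{i\tau'+\xi^2}=\int_0^te^{-(t-s)\xi^2}e^{is\tau'}\,ds$ together with its $t<0$ analogue coming from the new extension, each block can be written, for fixed sign of $t$, as $e^{(t-|t|)\xi^2}G_{L_1}(t)-e^{-|t|\xi^2}c_{L_1}$, where $G_{L_1}=\F_t^{-1}(\varphi_{L_1}g)$ and $c_{L_1}=G_{L_1}(0)=\int\varphi_{L_1}g$. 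The two facts I would use are $\|G_{L_1}\|_{L^2_t}\sim\|\varphi_{L_1}g\|_{L^2}$ (Plancherel) and $|c_{L_1}|\lesssim L_1^{1/2}\|\varphi_{L_1}g\|_{L^2}$ (Cauchy--Schwarz on $\supp\varphi_{L_1}$). Note that the two contributions agree at $t=0$, so $k_\xi^{L_1}$ is continuous there with only a kink, which is what will produce off-diagonal decay.

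I would then compute $\F_t(k_\xi^{L_1})=\widehat\eta*\F_t(m_{L_1})$ block by block: the oscillatory part $\eta\,e^{(t-|t|)\xi^2}G_{L_1}$ has Fourier transform essentially $\varphi_{L_1}g$ convolved with the rapidly decaying $\widehat\eta$ and a width-$N^2$ Lorentzian coming from $e^{(t-|t|)\xi^2}$, while the part $\eta\,e^{-|t|\xi^2}c_{L_1}$ is a constant times the regularised Lorentzian $\F_t(\eta\chi_{\R^\pm}e^{-|t|\xi^2})$ of width $\sim N^2$. From these one reads off an almost-diagonal bound
$$\cro{L+N^2}^{1/2}\|\varphi_L\F_t(k_\xi^{L_1})\|_{L^2_\tau}\lesssim K(L,L_1)\,\cro{L_1+N^2}^{1/2}\|\varphi_{L_1}g\|_{L^2},$$
where $K$ decays once $L$ and $L_1$ separate (from the decay of $\widehat\eta$ and the $\cro\tau^{-2}$ tail of the kink) and where in the diagonal and low-modulation regimes the weight $\cro{L+N^2}^{1/2}$ is recovered from the width-$N^2$ smoothing. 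Verifying $\sum_LK(L,L_1)\lesssim 1$ uniformly in $L_1$ then allows summation first in $L$ and then in $L_1$ by Schur's test. The factor $L_1^{1/2}$ lost in $c_{L_1}$ is harmless since $L_1^{1/2}\lesssim\cro{L_1+N^2}^{1/2}$.

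The main obstacle is the regime where both $L$ and $L_1$ are $\lesssim N^2$, i.e.\ the dissipation-dominated low-modulation region that already forced the Besov ($q=1$) refinement: there the gain $\cro{L+N^2}^{1/2}\sim N$ cannot be extracted pointwise in $\tau$ but only through the width-$N^2$ Lorentzians, and one must sum in $\ell^1$ over the $\sim\log N^2$ dyadic values of $L$ without losing a logarithm, which is precisely where the $e^{-|t|\xi^2}$ smoothing must be used sharply. A secondary, genuinely new difficulty relative to Proposition~2.3 of \cite{MR2} is the negative-time factor $e^{(t-|t|)\xi^2}=e^{2t\xi^2}$: although bounded for $t<0$, one must check that it neither destroys the frequency localisation of $G_{L_1}$ nor worsens the kink at $t=0$, so that the same almost-diagonal bound survives on the half-line $t<0$.
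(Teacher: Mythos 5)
Your reduction in the first paragraph contains a genuine error, and it is fatal to the rest of the argument. The ``elementary fact'' $|i\tau'+\xi^2|\sim\cro{L_1+N^2}$ on $\supp\,\varphi_{L_1}(\tau')\varphi_N(\xi)$ is false in the regime where \emph{both} $L_1\ll1$ and $N\ll1$: there $|i\tau'+\xi^2|\sim\max(L_1,N^2)=L_1+N^2\ll1$ while $\cro{L_1+N^2}\sim1$. Consequently your reformulated right-hand side $R'=\sum_{L_1}\cro{L_1+N^2}^{1/2}\|\varphi_{L_1}g\|_{L^2}\sim\sum_{L_1}\frac{\cro{L_1+N^2}^{1/2}}{L_1+N^2}\|\varphi_{L_1}\varphi_N\widetilde w\|_{L^2}$ exceeds the lemma's right-hand side $\sum_{L_1}\cro{L_1+N^2}^{-1/2}\|\varphi_{L_1}\varphi_N\widetilde w\|_{L^2}$ by the unbounded factor $\cro{L_1+N^2}/(L_1+N^2)$; proving $\mathrm{LHS}\lesssim R'$ is strictly weaker than the lemma, and this loss occurs exactly in the region needed for the application (the $X^{-1,-\frac12,1}$ norm sums over \emph{all} dyadic $N$ and $L$, small ones included).

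The loss is not an artifact of bookkeeping: it reflects the fact that your block decomposition $k_\xi^{L_1}=\eta(t)\bigl[e^{(t-|t|)\xi^2}G_{L_1}(t)-e^{-|t|\xi^2}c_{L_1}\bigr]$, with each piece estimated separately, cannot work when $L_1\sim N^2\ll1$. Indeed, take $\widetilde w_N$ a single nonnegative block at modulation $L_1$, so that Cauchy--Schwarz is saturated and $|c_{L_1}|\sim L_1^{1/2}\|\varphi_{L_1}g\|_{L^2}$; then the piece $\eta(t)e^{-|t|\xi^2}c_{L_1}$ alone has $\sum_L\cro{L+N^2}^{1/2}\|P_L\cdot\|_{L^2_t}$ norm of size $\sim L_1^{1/2}(L_1+N^2)^{-1}\|\varphi_{L_1}\widetilde w_N\|_{L^2}\sim L_1^{-1/2}\|\varphi_{L_1}\widetilde w_N\|_{L^2}$, which is larger than the allowed $\cro{L_1+N^2}^{-1/2}\|\varphi_{L_1}\widetilde w_N\|_{L^2}\sim\|\varphi_{L_1}\widetilde w_N\|_{L^2}$ by $L_1^{-1/2}\gg1$. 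The lemma is only true because of the cancellation between the two pieces when $\tau$ and $\xi$ are simultaneously small (the numerator of the kernel vanishes where the denominator does); this is precisely what the paper's proof retains by splitting the $\tau$-integral at $|\tau|=1$ and Taylor-expanding $e^{it\tau}-1$ (term $I$) and $e^{(t-|t|)\xi^2}-e^{-|t|\xi^2}$ (term $II$) in the low region, reserving the splitting into $III-IV$ (which is the analogue of your $G_{L_1}/c_{L_1}$ separation, and where your paraproduct/almost-orthogonality scheme does parallel the paper) for $|\tau|\geq1$, where the denominator is harmless. You in fact observed that the two pieces agree at $t=0$, but you used this only for off-diagonal decay; it must instead be used to gain the factor $\min(|\tau|,1)$ and $\xi^2$ before any separate estimation. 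A secondary issue is that your almost-diagonal bound with a Schur-summable kernel $K(L,L_1)$ is asserted rather than proved, and you yourself flag its low-modulation case as an unresolved ``main obstacle''; in the paper this is where the estimate \re{est-lin1}, the bounds $|\hat\theta(\tau)|\lesssim\min(|\tau|^{-1},\cro{\xi}^2\tau^{-2})$, and the $\dot B^{1/2}_{2,1}$ scaling argument carry the load, and the dyadic sums close geometrically (no logarithmic loss) because of the $L^{\pm1/2}$ weights.
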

\begin{proof}
Following \cite{MR2}, we rewrite $k_\xi$ as
\begin{align*}
k_\xi(t) &= \eta(t)e^{(t-|t|)\xi^2}\int_{|\tau|\leq 1}\frac{e^{it\tau}-1}{i\tau+\xi^2}\widetilde{w_N}(\tau)d\tau + \eta(t)\int_{|\tau|\leq 1}\frac{e^{(t-|t|)\xi^2}-e^{-|t|\xi^2}}{i\tau+\xi^2}\widetilde{w_N}(\tau)d\tau\\
&\quad +\eta(t)e^{(t-|t|)\xi^2}\int_{|\tau|\geq 1}\frac{e^{it\tau}}{i\tau+\xi^2}\widetilde{w_N}(\tau)d\tau - \eta(t)\int_{|\tau|\geq 1}\frac{e^{-|t|\xi^2}}{i\tau+\xi^2}\widetilde{w_N}(\tau)d\tau\\
&= I+II+III-IV
\end{align*}
where $w_N$ is defined by $\F_x(w_N)(\xi)=\varphi_N(\xi)\F_x(w)(\xi)$.

\vskip .3cm
\noindent
Contribution of $IV$. Clearly we have
$$\|P_L(IV)\|_{L^2_t}\lesssim \|P_L(\eta(t)e^{-|t|\xi^2})\|_{L^2_t}\int_{|\tau|\geq 1}\frac{|\widetilde{w_N}(\tau)|}{\cro{i\tau+\xi^2}}d\tau.$$
On the other hand, by Cauchy-Schwarz in $\tau $, 
$$\int_{|\tau|\geq 1}\frac{|\widetilde{w_N}(\tau)|}{\cro{i\tau+\xi^2}}d\tau \lesssim \sum_L\cro{L+N^2}^{-1}\|\varphi_L\widetilde{w_N}\|_{L^1_\tau}
\lesssim \sum_L \cro{L+N^2}^{-1/2}\|\varphi_L\widetilde{w_N}\|_{L^2_\tau},$$
which combined with (\ref{est-lin1}) yields the desired bound.

\vskip .3cm
\noindent
Contribution of II. By Cauchy-Schwarz inequality,
\begin{align}
\notag\|P_L(II)\|_{L^2_t} &\lesssim \|P_L(\eta(t)(e^{(t-|t|)\xi^2}-e^{-|t|\xi|^2}))\|_{L^2_t}\\
\notag &\quad\times \left(\int\frac{|\widetilde{w_N}(\tau)|^2}{\cro{i\tau+\xi^2}}d\tau\right)^{1/2}\left(\int_{|\tau|\leq 1}\frac{\cro{i\tau+\xi^2}}{|i\tau+\xi^2|^2}d\tau\right)^{1/2}\\
\notag &\lesssim \|P_L(\eta(t)(e^{(t-|t|)\xi^2}-e^{-|t|\xi|^2}))\|_{L^2_t}\\
\label{est-II}&\quad\times N^{-2}\cro{N}\sum_L\cro{L+N^2}^{-1/2}\|\varphi_L\widetilde{w_N}\|_{L^2_\tau}.
\end{align}
Hence we need to estimate
\begin{multline*}
\sum_L\cro{L+N^2}^{1/2}\|P_L(\eta(t)(e^{(t-|t|)\xi^2}-e^{-|t|\xi|^2}))\|_{L^2_t}\\ \lesssim \sum_L\cro{L+N^2}^{1/2}(\|P_L(\eta(t)e^{(t-|t|)\xi^2})\|_{L^2_t} +\|P_L(\eta(t)e^{-|t|\xi^2})\|_{L^2_t}).
\end{multline*}
The second term in the right-hand side is bounded by 1 thanks to estimate(\ref{est-lin1}). Denote $\theta(t)=\eta(t)e^{(t-|t|)\xi^2}$. It is not too hard to check that one integration by parts yields $|\hat{\theta}(\tau)|\lesssim \frac 1{|\tau|}$ whereas two integrations by parts give us $|\hat{\theta}(\tau)|\lesssim\frac{\cro{\xi}^2}{|\tau|^2}$.
We thus infer that
\begin{multline}
\sum_L\cro{L+N^2}^{1/2}\|\varphi_L\hat{\theta}\|_{L^2_\tau} \lesssim \sum_{L\leq 1}\cro{N}L^{1/2}\|\theta\|_{L^1_t}\\
\label{est-I}+\sum_{1\leq L\leq \cro{N}^2}\frac{\cro{N}}{L^{1/2}} +\sum_{L\geq \cro{N}^2}\cro{L}^{1/2}\frac{\cro{N}^2}{L^{3/2}}\lesssim \cro{N}.
\end{multline}
This provides the result for $N\geq 1$. In the case $N\leq 1$, we use a Taylor expansion and obtain
\begin{multline*}
\|P_L(\eta(t)(e^{(t-|t|)\xi^2}-1+1-e^{-|t|\xi^2})\|_{L^2_t}\\ \lesssim \sum_{n\geq 1}\frac{|\xi|^{2n}}{n!}\left(\|P_L(|t|^n\eta(t))\|_{L^2_t} + 2^n\|P_L(t^n\eta(t)\chi_{\R_-}(t))\|_{L^2_t}\right).
\end{multline*}
According to the Sobolev embedding $H^1\hookrightarrow B^{1/2}_{2,1}$ as well as the estimate $\|\chi_{\R_-}f\|_{H^1}\lesssim \|f\|_{H^1}$ provided $f(0)=0$, we deduce
\begin{align*}
&\sum_L\cro{L+N^2}^{1/2}\|P_L(\eta(t)(e^{(t-|t|)\xi^2}-e^{-|t|\xi^2}))\|_{L^2_t}\\
&\quad \lesssim \xi^2\sum_{n\geq 1}\frac 1{n!}(\||t|^n\eta(t)\|_{B^{1/2}_{2,1}}+2^n \|t^n\eta(t)\chi_{\R_-}(t)\|_{B^{1/2}_{2,1}})\\
&\quad \lesssim N^2\sum_{n\geq 1}\frac{ 2^n}{n!}\||t|^n\eta(t)\|_{H^1_t} \lesssim N^2.
\end{align*}
Gathering this and (\ref{est-II}) we conclude that 
$$\sum_L\cro{L+N^2}^{1/2}\|P_L(II)\|_{L^2_t}\lesssim \sum_L\cro{L+N^2}^{-1/2}\|\varphi_L\widetilde{w_N}\|_{L^2_\tau}.$$

\vskip .3cm
\noindent
Contribution of I. Since $I$ can be rewritten as
$$I = \eta(t)e^{(t-|t|)\xi^2}\int_{|\tau|\leq 1}\sum_{n\geq 1}\frac{(it\tau)^n}{n!}\frac{\widetilde{w_N}(\tau)}{i\tau+\xi^2}d\tau,$$
we have
$$\|P_L(I)\|_{L^2_t} \lesssim \sum_{n\geq 1}\frac 1{n!}\|P_L(t^n\theta(t))\|_{L^2_t}\int_{|\tau|\leq 1}\frac{|\tau|^n}{|i\tau+\xi^2|}|\widetilde{w_N}(\tau)|d\tau.$$
Using Cauchy-Schwarz we get, for $n\geq 1$,
\begin{align*}
\int_{|\tau|\leq 1}\frac{|\tau|^n}{|i\tau+\xi^2|}|\widetilde{w_N}(\tau)d\tau &\lesssim \left(\int\frac{|\widetilde{w_N}(\tau)|^2}{\cro{i\tau+\xi^2}}d\tau\right)^{1/2}
\left(\int_{|\tau|\leq 1}\frac{|\tau|^2\cro{i\tau+\xi^2}}{|i\tau+\xi^2|^2}d\tau\right)^{1/2}\\
&\lesssim \cro{N}^{-1}\sum_L\cro{L+N^2}^{-1/2}\|\varphi_L\widetilde{w_N}\|_{L^2_\tau}.
\end{align*}
Thus we see that it suffices to show that (see above  the contribution of II for the definition of $\theta$)
$$\sum_L\cro{L+N^2}^{1/2}\sum_{n\geq 1}\frac 1{n!}\|P_L(t^n\theta(t))\|_{L^2_t}\lesssim \cro{N}.$$
But again we have $|\F_t(t^n\theta(t))|\lesssim 2^n\min(\frac 1{|\tau|}, \frac{\cro{\xi}^2}{\tau^2})$ and arguing as in (\ref{est-I}), we get
$$\sum_L\cro{L+N^2}^{1/2}\sum_{n\geq 1}\frac 1{n!}\|P_L(t^n\theta(t))\|_{L^2_t}\lesssim \sum_{n\geq 1}\cro{N}\frac{2^n}{n!}\lesssim \cro{N}.$$

\vskip .3cm
\noindent
Contribution of III. Setting $\hat{g}(\tau) = \frac{\widetilde{w_N}(\tau)}{i\tau+\xi^2}\chi_{|\tau|\geq 1}$, we have to prove
\begin{equation}
\sum_L\cro{L+N^2}^{1/2}\|P_L(\theta g)\|_{L^2_t}\lesssim \sum_L\cro{L+N^2}^{1/2}\|P_Lg\|_{L^2_t}.
\end{equation}
Using the paraproduct decomposition, we have
$$P_L(\theta g) = P_L\Big(\sum_{M\gtrsim L}(P_{\lesssim M}\theta P_{\sim M}g + P_{\sim M}\theta P_{\lesssim M}g)\Big) = P_L(III_1)+P_L(III_2)$$
and we estimate the contributions of these two terms separately.

\vskip .3cm
\noindent
Contribution of $III_1$. The sum over $L\geq \cro{N}^2$ is estimated in the following way:
\begin{align*}
\sum_{L\geq \cro{N}^2}\cro{L+N^2}^{1/2}\|P_L(III_1)\|_{L^2_t} &\lesssim \sum_{L\geq \cro{N}^2}\cro{L}^{1/2}\sum_{M\gtrsim L}\|P_{\lesssim M}\theta\|_{L^\infty_t}\|P_M g\|_{L^2_t}\\
&\lesssim \sum_M\cro{M}^{1/2}\|P_Mg\|_{L^2_t}.
\end{align*}
Now we deal with the case where $L\lesssim \cro{N}^2$. If $\hat{\theta}$ is localized in an annulus $\{|\tau|\sim M\}$, we get from Bernstein inequality that
\begin{align}
\notag \sum_{L\leq \cro{N}^2}\cro{L+N^2}^{1/2}\sum_{M\gtrsim L}\|P_L(P_M\theta P_M g)\|_{L^2_t} &\lesssim \sum_M\cro{N}\sum_{L\lesssim M}L^{1/2}\|P_M\theta P_Mg\|_{L^1_t}\\
\notag &\lesssim \sum_M\cro{N}M^{1/2}\|P_M\theta\|_{L^2_t}\|P_Mg\|_{L^2}\\
\label{est-III1}&\lesssim \sum_M\cro{N}\|P_Mg\|_{L^2_t},
\end{align}
where we used the estimate $\|P_M\theta\|_{L^2_t}\lesssim \|\frac{\varphi_M(\tau)}{\tau}\|_{L^2_\tau}\lesssim M^{-1/2}$. If $\hat{\theta}$ is localized in a ball $\{|\tau|\ll M\}$, then we must have $M\sim L$ and thus
$$\sum_{L\leq \cro{N}^2}\cro{L+N^2}^{1/2}\sum_{M\sim L}\|P_L(P_{\ll M}\theta P_M g)\|_{L^2_t} \lesssim \sum_L\cro{N}\|P_{\ll L}\theta\|_{L^\infty_t}\|P_Lg\|_{L^2_t},$$
which is acceptable.

\vskip .3cm
\noindent
Contribution of $III_2$. Consider the case $L\geq \cro{N}^2$. Since $|\hat{\theta}|\lesssim\frac{\cro{\xi}^2}{\tau^2}$, we have
$$\|P_L(P_M\theta P_{\lesssim M}g)\|_{L^2_t}\lesssim \|\varphi_M\hat{\theta}\|_{L^1_\tau}\|P_{\lesssim M} g\|_{L^2_t} \lesssim \frac{\cro{N}^2}{M}\|g\|_{L^2_t}.$$
It follows that
$$\sum_{L\geq \cro{N^2}}\cro{L+N^2}^{1/2}\|P_L(III_2)\|_{L^2_t}\lesssim \sum_{M\gtrsim \cro{N}^2}M^{1/2}\frac{\cro{N}^2}M \|g\|_{L^2_t}\lesssim \cro{N}\|g\|_{L^2_t}.$$
It remains to establish the bound in the case $L\leq \cro{N}^2$. We may assume that $\hat{g}$ is supported in a ball $\{|\tau|\ll M\}$ since the other case has already been treated (cf. estimate (\ref{est-III1})).
Therefore, $M\sim L$ and
\begin{align*}
\sum_{L\leq \cro{N}^2}\cro{L+N^2}^{1/2}\sum_{M\gtrsim L}\|P_L(P_M\theta P_{\ll M}g)\|_{L^2_t} &\lesssim \sum_L\cro{N}\|P_L\theta P_{\ll L}g\|_{L^2_t}\\
&\lesssim \sum_L\cro{N}\|P_L\theta\|_{L^2_t}\sum_{M\ll L}\|P_Mg\|_{L^\infty_t}\\
&\lesssim \sum_L\cro{N} L^{-1/2}\sum_{M\ll L}M^{1/2}\|P_Mg\|_{L^2_t}\\
&\lesssim \sum_M\cro{N}\|P_Mg\|_{L^2_t}.
\end{align*}
The proof of Lemma \ref{lem-kxi} is complete.
\end{proof}

\begin{proposition}
\label{linearL}
Let $\L : f\to \L f$ denote the linear operator
\begin{eqnarray}
\L f(t,x) &=& \eta(t)\Bigl( \chi_{\R^+}(t)\int_0^tW(t-t',t-t')f(t')dt'\nonumber \\
 & & + \chi_{\R^-}(t)\int_0^tW(t-t',t+t')f(t')dt'\Bigr) \label{eq-L} \; .
\end{eqnarray}
 If $f\in \NN^{-1}$, then
\begin{equation}\label{est-linNhom}\| \L f \|_{\S^{-1}} \lesssim \|f\|_{\NN^{-1}}.\end{equation}
\end{proposition}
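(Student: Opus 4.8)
The plan is to decompose the operator $\L f$ according to the norm it must be controlled in, splitting both the output space $\S^{-1}=X^{-1,1/2,1}+Y^{-1,1/2}$ and the input space $\NN^{-1}=X^{-1,-1/2,1}+Y^{-1,-1/2}$. Since $\L$ is linear, I would first reduce by duality/linearity to the case where $f$ is dyadically localized, say $f=P_NQ_Lf$, and then sum up the dyadic pieces at the end. The overall strategy mirrors Proposition 2.3 of \cite{MR2}, but the two-parameter operator $W(t-t',t\pm t')$ forces us to handle the positive-time and negative-time contributions $\chi_{\R^+}(t)$ and $\chi_{\R^-}(t)$ separately, which is precisely the delicate point the introduction of Section \ref{sec-lin} warns about.

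I would carry out the estimate in two stages. \emph{Stage one:} bound $\|\L f\|_{X^{-1,1/2,1}}$ by $\|f\|_{\NN^{-1}}$. Computing the space-time Fourier transform of $\L f$ and recognizing that, after peeling off the cutoff $\eta(t)$ and the dispersive phase $e^{it\xi^3}$, the time-kernel has exactly the form treated in Lemma \ref{lem-kxi}, namely
$$
k_\xi(t)=\eta(t)\varphi_N(\xi)\int_\R\frac{e^{it\tau}e^{(t-|t|)\xi^2}-e^{-|t|\xi^2}}{i\tau+\xi^2}\widetilde{w}(\tau)\,d\tau,
$$
I would apply that lemma directly to get
$$
\sum_L\cro{L+N^2}^{1/2}\|P_Lk_\xi\|_{L^2_t}\lesssim \sum_L\cro{L+N^2}^{-1/2}\|\varphi_L(\tau)\varphi_N(\xi)\widetilde{w}\|_{L^2_\tau}.
$$
Squaring in $\xi$, integrating, weighting by $\cro{N}^{-1}$ and square-summing over $N$ then yields the bound by $\|f\|_{X^{-1,-1/2,1}}$; the $Y^{-1,-1/2}$ part of the input is absorbed using the smoothing estimate (\ref{est-Y0}) together with the relation between the $Y$-norm and $(\partial_t+\partial_{xxx}-\partial_{xx}+I)$. \emph{Stage two:} bound $\|\L f\|_{Y^{-1,1/2}}$ by $\|f\|_{\NN^{-1}}$. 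Here I would exploit that $\L f$ solves a forced KdV-Burgers equation: applying $(\partial_t+\partial_{xxx}-\partial_{xx}+I)P_N$ to $\L f$ and using the definition of the $Y$-norm as $\cro{N}^{-1}\|(\partial_t+\partial_{xxx}-\partial_{xx}+I)P_N\L f\|_{L^1_tL^2_x}$, the differential operator hits both the integral and the cutoff $\eta(t)$. On $\R^+$ the equation $(\partial_t+\partial_{xxx}-\partial_{xx})\int_0^t W(t-t',t-t')f=f$ holds by construction, so the main term reduces to $\eta(t)f$ plus commutator terms where the derivatives fall on $\eta(t)$ or on the cutoffs $\chi_{\R^\pm}$.

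The main obstacle, as anticipated in the text, is the negative-time branch $\chi_{\R^-}(t)\int_0^tW(t-t',t+t')f(t')\,dt'$. The specific choice $t+t'$ (rather than $t-t'$) in the dissipation parameter was made precisely so that this branch also satisfies a clean forced PDE for $t<0$ (see (\ref{eses})); I would verify that applying $(\partial_t+\partial_{xxx}-\partial_{xx}+I)$ to it again produces $f$ plus controllable boundary/commutator contributions, rather than the uncontrollable terms that the extension of \cite{MR2} would generate here. The continuity of the two branches at $t=0$ must be checked so that no singular distribution is created by differentiating across the junction. Once both branches are reduced to $\eta(t)f$ modulo commutators, the $L^1_tL^2_x$ norm of $\eta(t)f$ is controlled by $\|f\|_{\NN^{-1}}$ via duality between $X^{-1,-1/2,1}$, $Y^{-1,-1/2}$ and the $L^1_tL^2_x$-based structure, and the commutator terms, which gain decay from the rapid decay of $\eta$ and from the dissipative factor $e^{-|t|\xi^2}$, are lower-order and summable in the dyadic parameters. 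Assembling stages one and two gives the claimed estimate (\ref{est-linNhom}).
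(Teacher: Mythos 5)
Your overall architecture is the same as the paper's: reduce to estimates on the $X$- and $Y$-components, prove the $X$-estimate by peeling off the phase and invoking Lemma \ref{lem-kxi} on the kernel $k_\xi$, and prove the $Y$-estimate by applying $(\partial_t+\partial_{xxx}-\partial_{xx}+I)P_N$ and treating $\chi_{\R^+}\L f$ and $\chi_{\R^-}\L f$ separately. One structural remark first: since $\NN^{-1}$ and $\S^{-1}$ are sum spaces, it suffices to prove the two \emph{diagonal} estimates $\|\L f\|_{X^{-1,\frac12,1}}\lesssim\|f\|_{X^{-1,-\frac12,1}}$ and $\|\L f\|_{Y^{-1,\frac12}}\lesssim\|f\|_{Y^{-1,-\frac12}}$, which is what the paper does. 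Your stage one instead aims at $\|\L f\|_{X^{-1,\frac12,1}}\lesssim\|f\|_{\NN^{-1}}$, which is stronger than needed and almost certainly false for inputs lying only in $Y^{-1,-\frac12}$; moreover the mechanism you propose for "absorbing" the $Y$-part of the input, namely estimate (\ref{est-Y0}), cannot serve this purpose, since (\ref{est-Y0}) concerns the resolution-space norm $Y^{0,\frac12}$ and not the nonlinear-space norm $Y^{-1,-\frac12}$.

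The genuine gap is in your stage two, exactly at the point you identify as delicate. The negative-time branch does \emph{not} satisfy a forced KdV-Burgers equation up to boundary/commutator terms. Because the dissipative parameter is $|t+t'|$ with $t+t'\le 0$ in the relevant range, a direct computation (the paper's (\ref{nh})) gives
$$(\partial_t+\partial_{xxx}+\partial_{xx}+I)(\chi_{\R^-}\L f)=\eta\,\chi_{\R^-}\,W(0,2t)f+(\eta'+\eta)\chi_{\R^-}\int_0^tW(t-t',t+t')f(t')\,dt',$$
i.e. the equation holds with the \emph{wrong-sign} Laplacian. Rewriting this in terms of the operator that defines the $Y^{-1,\frac12}$ norm, as in (\ref{eses}), produces the extra term $-2\partial_{xx}(\chi_{\R^-}\L f)$. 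This is a full second-order term, not a "controllable boundary/commutator contribution", and bounding $\|\partial_{xx}w\|_{L^1_tL^2_x}$ for $w=P_N(\chi_{\R^-}\L f)$ is the heart of the paper's proof: one pairs the equation $w_t-w_{xxx}+w_{xx}+w=g$ with $w$ in $L^2_x$, uses the frequency localization and Bernstein ($\|w_x\|_{L^2_x}\ge\tfrac12 N\|w\|_{L^2_x}$) to turn the $-\|w_x\|_{L^2_x}^2$ term into a coercive $N^2\|w\|_{L^2_x}^2$, divides by $\|w\|_{L^2_x}$ and integrates the resulting differential inequality to obtain $N^2\|w\|_{L^1_tL^2_x}\lesssim\|w\|_{L^\infty_tL^2_x}+\|w\|_{L^1_tL^2_x}+\|g\|_{L^1_tL^2_x}$, and finally bounds $\|w\|_{L^1_tL^2_x}+\|w\|_{L^\infty_tL^2_x}\lesssim\|P_Nf\|_{L^1_tL^2_x}$ by a direct kernel estimate. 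Without this (or an equivalent) argument your stage two does not close: the plan of "verifying that only controllable terms arise" fails because an uncontrolled second-order term genuinely does arise.
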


\begin{proof}

It suffices to show that
\begin{equation}\label{est-lin2}\|\L f\|_{X^{-1,\frac 12,1}}\lesssim \|f\|_{X^{-1,-\frac 12,-1}}\end{equation}
and
\begin{equation}\label{est-lin3}\|\L f\|_{Y^{-1,\frac 12}}\lesssim \|f\|_{Y^{-1,-\frac 12}}.\end{equation}
Taking the $ x$-Fourier transform,  we get
\arraycolsep1pt
\begin{align*}
\L f(t,x)  & =  U(t)
\Biggl[\chi_{\R+}(t) \,   \eta(t) \,\int_{\R} e^{i x \xi} \int_0^t
 e^{-|t-t'| \xi^2} {\cal F}_x (U(-t') f(t') )(\xi) \, dt' d\xi \\
& \quad +
\chi_{\R-}(t) \,   \eta(t) \,\int_{\R}  e^{i x \xi} \int_0^t
 e^{-|t+t'| \xi^2} {\cal F}_x (U(-t') f(t') )(\xi) \, dt' d\xi \Biggr] \\
 & =  U(t)
\Biggl[  \eta(t) \,\int_{\R}  e^{i x \xi} \int_0^t
 e^{-|t| \xi^2}  e^{t' \xi^2}{\cal F}_x (U(-t') f(t') )(\xi) \, dt' d\xi \Biggr] \; .
 \end{align*}
 Setting $w(t') = U(-t')f(t')$, and using the time Fourier transform, we infer that
 $$\L f(t,x) = U(t)\left[\eta(t)\int_{\R^2}e^{ix\xi}\frac{e^{it\tau}e^{(t-|t|)\xi^2}-e^{-|t|\xi^2}}{i\tau+\xi^2}\tilde{w}(\tau,\xi)d\tau d\xi\right].$$
 Estimate (\ref{est-lin2}) follows then easily from Lemma \ref{lem-kxi}.

Now we turn to estimate (\ref{est-lin3}). After square summing, it suffices to prove that for any dyadic $N$,
\begin{equation}\label{est-linY}\|(\partial_t+\partial_{xxx}-\partial_{xx}+I)P_N\L f\|_{L^1_tL^2_x}\lesssim \|P_Nf\|_{L^1_tL^2_x}.\end{equation}
In view of the expression of $ \L $ it suffices to prove  \re{est-linY} separately for $ \chi_{\R^+} \L f$ and $\chi_{\R^-} \L f$
First, a straightforward calculation leads to
\begin{multline*}(\partial_t+\partial_{xxx}-\partial_{xx}+I)(\chi_{\R^+} \L f(t) )\\= \eta(t)\chi_{\R^+}(t)f(t)+(\eta'(t)+\eta(t))\chi_{\R^+}(t)\int_0^tW(t-t',t-t')f(t')dt'.\end{multline*}
Computing the $L^1_tL^2_x$ norm, we get
\begin{multline*}\|(\partial_t+\partial_{xxx}-\partial_{xx}+I)P_N (\chi_{\R^+} \L f)\|_{L^1_tL^2_x} \\\lesssim \|f\|_{L^1_tL^2_x}+ \|\eta'+\eta\|_{L^1_t}\sup_t\int_0^\infty\|e^{i(t-t')\xi^3}e^{-(t-t')\xi^2}\widehat{f}(t')\|_{L^2_\xi}dt',\end{multline*}
and estimate (\ref{est-linY}) follows.

Now, let us tackle the proof for $\chi_{\R^-} \L f$. We have to work a little more since clearly $ \L f $ does not satisfy the same equation for negative times.
 Actually, one can check that
 \begin{multline}(\partial_t+\partial_{xxx}+\partial_{xx}+I)(\chi_{\R^-} \L f(t) )\\
 =
 \eta(t)\chi_{\R^-}(t) W(0,2t) f(t)+(\eta'(t)+\eta(t))\chi_{\R^-}(t)\int_0^tW(t-t',t+t')f(t')dt'.
 \label{nh}\end{multline}
 and thus
  \begin{multline}(\partial_t+\partial_{xxx}-\partial_{xx}+I)(\chi_{\R^-} \L f(t) )
 = -2\partial_{xx} (\chi_{\R^-} \L f(t) )\\
+ \eta(t)\chi_{\R^-}(t)W(2t,0) f(t)+(\eta'(t)+\eta(t))\chi_{\R^-}(t)\int_0^tW(t-t',t+t')f(t')dt'.
\label{eses}\end{multline}
Setting $ w:=P_N( \chi_{\R^-} \L f(t)) $ and
$$
g:= \eta(t)\chi_{\R^-}(t)W(2t,0) f(t)+(\eta'(t)+\eta(t))\chi_{\R^-}(t)\int_0^tW(t-t',t+t')f(t')dt'
$$
we first note as above that
 \begin{equation}
 \| g\|_{L^1_t L^2_x} \lesssim \|f\|_{L^1_tL^2_x}\; . \label{zszs}
 \end{equation}
 Now, according to \re{nh}, $ w$ satisfies
 $$
 w_t -w_{xxx}+w_{xx}+w= g
 $$
 Taking the $ L^2_x $-scalar product with $ w$ and using Cauchy-Schwarz yield
 \begin{equation}
 \frac{1}{2}\frac{d}{dt} \|w\|^2_{L^2_x}- \| w_x\|_{L^2_x}^2+\|w\|^2_{L^2_x}\ge -\|g\|_{L^2_x} \|w\|_{L^2_x}\; .\label{nh1}
 \end{equation}
 By the  frequencies localization of $w$ and Bernstein inequality,$ \|w_x\|_{L^2_x}\ge \frac{1}{2} N \|w\|_{L^2_x} $.
 Therefore,  for $ t>0 ,$ such that $ \|w(t)\|_{L^2_x} \neq 0 $, we can divide \re{nh1} by $ \|w(t)\|_{L^2_x}  $  to get 
 \begin{equation} \label{ds}
 N^2 \|w(t)\|_{L^2_x}\lesssim \frac{d}{dt} \|w(t)\|_{L^2_x}+\|w(t)\|_{L^2_x}+\|g(t)\|_{L^2_x} 
 \end{equation}
 On the other hand, for $ t>0 $, the smoothness and non negativity  of $t\mapsto \|w(t)\|_{L^2_x} $ forces $ \frac{d}{dt} \|w(t)\|^2_{L^2_x}=0 $ as soon as $ \|w(t)\|_{L^2_x} = 0 $. This ensures that \re{ds} is actually valid for all $ t>0 $. 
 Therefore  integrating \re{ds} on $]0,t[ $  we infer that
 $$
  \|w_{xx} \|_{L^1_t L^2_x}  \sim N^2 \|w\|_{L^1_t L^2_x}\lesssim \|w\|_{L^\infty_t L^2_x} + \|w\|_{L^1_tL^2_x} +  \| g\|_{L^1_t L^2_x}\; .
 $$
Since obviously,
$$
\|w\|_{L^1_tL^2_x} + \|w\|_{L^\infty_t L^2_x} \lesssim \sup_t\int_0^\infty\|e^{i(t-t')\xi^3}e^{-|t+t'|\xi^2}\widehat{P_N f}(t')\|_{L^2_\xi}dt'\lesssim \|P_N f\|_{L^1_t L^2_x}
$$
it follows that
$$
\|w_{xx} \|_{L^1_t L^2_x} \lesssim \|P_N f\|_{L^1_t L^2_x}
$$
which concludes the proof together with (\ref{eses}) and (\ref{zszs}).
\end{proof}

\section{Bilinear estimate}\label{sectionbilinear}
In this section we provide a proof of the following crucial bilinear estimate.
\begin{proposition}\label{propo}
For all $u,v\in\S^{-1}$, we have
\begin{equation}\label{est-bil}\|\partial_x(uv)\|_{\NN^{-1}}\lesssim \|u\|_{\S^{-1}}\|v\|_{\S^{-1}}.\end{equation}
\end{proposition}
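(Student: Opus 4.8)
The plan is to prove the bilinear estimate \re{est-bil} by a frequency-localized analysis, reducing everything to dyadic pieces via the Littlewood-Paley decomposition and then exploiting the gain coming from the dissipative weight $\cro{L+N^2}$ in the norms. First I would write $u=\sum_{N_1}P_{N_1}u$, $v=\sum_{N_2}P_{N_2}v$ and decompose the output frequency $N$ according to the three standard regimes: \emph{high-low} ($N\sim N_1\gg N_2$ and symmetrically), \emph{high-high} ($N_1\sim N_2\gg N$), and the essentially diagonal \emph{low} interactions. Since the $\NN^{-1}$ norm is an infimum over $X^{-1,-1/2,1}+Y^{-1,-1/2}$ decompositions, for each regime I would choose to place the output in whichever of the two spaces is more favorable: the $X^{s,b,q}$-piece when the modulation variables carry enough weight, and the $Y^{s,b}$-piece (equivalently, an $L^1_tL^2_x$ bound on $(\partial_t+\partial_{xxx}-\partial_{xx}+I)^{-1/2}$-type quantities) when they do not.

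The core mechanism I would use is the resonance identity for the Airy symbol: for the convolution constraint $\xi=\xi_1+\xi_2$ one has $\xi^3-\xi_1^3-\xi_2^3=3\xi\xi_1\xi_2$, so the three modulations $\sigma=\tau-\xi^3$, $\sigma_1,\sigma_2$ satisfy $\sigma-\sigma_1-\sigma_2=-3\xi\xi_1\xi_2$, forcing the largest modulation $L_{\max}\gtrsim |\xi\xi_1\xi_2|$. In the high-low regime this gives $L_{\max}\gtrsim N^2 N_2$ (up to which frequency is smallest), which, combined with the $\cro{L+N^2}^{1/2}$ gain from the $\S^{-1}$ norms on the inputs and the derivative loss $\partial_x$ producing a factor $N$, should close the estimate after summing the dyadic pieces geometrically; here the Besov ($q=1$) structure is exactly what prevents the logarithmic divergence at the endpoint $s=-1$. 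The estimates from Section \ref{sec-space}, in particular the $L^2_{xt}$ control \re{est-L2S-1}, the $\dot B^{1/2}_{2,1}$-type square-function bound \re{est-L2l2}, the Kato smoothing \re{est-smooth}, and the Extension lemma, would be invoked to convert the dyadic $X^{s,b}$ and $Y^{s,b}$ norms into mixed Lebesgue norms amenable to H\"older and Bernstein.

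I expect the main obstacle to be precisely the \emph{high-high} regime, $N_1\sim N_2\gg N$, which is flagged in the text as the case where the Besov refinement alone is insufficient and for which the space $Y^{s,b}$ was introduced. In this regime the two inputs share the same high frequency while the output frequency $N$ is small, so the resonance function $3\xi\xi_1\xi_2\sim N N_1^2$ gives comparatively weak modulation control when $N$ is small, and the naive $X^{s,b}$ bound loses a logarithm (or worse) upon summing over $N$. The plan here is to place the output in $Y^{-1,-1/2}$ and estimate $\|\partial_x(P_{N_1}u\,P_{N_2}v)\|$ directly in $L^1_tL^2_x$, using the sharp Kato smoothing \re{est-smooth} on one factor (which supplies the crucial $N_1^{-1}$ gain matching the high frequency) and an $L^\infty_xL^2_t$ versus $L^2_xL^\infty_t$ splitting via H\"older and the bound \re{est-Lit} on the other factor, so that the derivative $\partial_x\sim N$ at low output frequency is harmless. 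Summing the resulting geometric series in $N\ll N_1\sim N_2$ and then square-summing in $N_1$ should yield the bound; handling the case distinctions between when the high modulation sits on the output versus on an input, and checking that the $Y$-norm is genuinely controlled (rather than merely the $L^1_tL^2_x$ norm) via the elliptic regularity argument in the spirit of \re{est-Y0}, will be the technically delicate points.
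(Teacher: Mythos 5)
Your overall architecture coincides with the paper's: the same dyadic trichotomy (high-low, low-high, high-high), the same use of the resonance identity $\xi^3-\xi_1^3-\xi_2^3=3\xi\xi_1\xi_2$ to force $L_{max}\gtrsim N^2N_{min}$ (resp. $N_1^2N$), the same dichotomy of placing the output in $X^{-1,-\frac12,1}$ when the dominant modulation sits on the output and in $Y^{-1,-\frac12}$ (i.e.\ an $L^1_tL^2_x$ bound) otherwise, and the same toolbox \re{est-L2S-1}, \re{est-L2l2}, \re{est-Lit}, \re{est-smooth}. Your high-low discussion is essentially the paper's proof of \re{HL}.

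There is, however, a genuine gap in your treatment of the high-high regime, which is precisely the case you single out as the main obstacle. The mechanism you propose there --- output in $Y^{-1,-\frac12}$, Kato smoothing \re{est-smooth} on one factor, an $L^2_xL^\infty_t$ versus $L^\infty_xL^2_t$ H\"older splitting on the other --- yields, per output block $P_N$ with $N\gtrsim 1$, a bound of the schematic form $N\cro{N}^{-1}\cdot N_1^{-1}\|P_{N_1}u\|_{\S^{0}}\cdot\|P_{N_1}v\|_{L^2}\lesssim \|P_{N_1}u\|_{\S^{-1}}\|P_{N_1}v\|_{\S^{-1}}$: the derivative $N$ is cancelled by the weight $\cro{N}^{-1}$, and Kato's gain $N_1^{-1}$ is exactly consumed by converting the $H^{-1}$-type input norms, so \emph{nothing decaying in $N$ remains}. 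The sum over the dyadic output frequencies $1\lesssim N\ll N_1$ is therefore not a geometric series as you claim; even the $\ell^2$ sum diverges like $(\log N_1)^{1/2}$, which is exactly the endpoint divergence the whole construction must avoid (using \re{est-Lit} on the second factor is worse still, since it costs a factor $N_1$). The paper closes this case by making the modulation analysis quantitative in $N$: (i) when $L_{max}=L\gtrsim N_1^2N$ and one input has modulation $L_1\lesssim N_1^2N^{1-\eps}$, it estimates the output in $X^{-1,-\frac12,1}$ so that the weight $\cro{L}^{-1/2}\lesssim (N_1^2N)^{-1/2}$ beats the $L_1^{1/2}$ cost of the $L^2_xL^\infty_t$ factor, gaining $N^{-\eps/2}$; (ii) when both inputs have modulation $\gtrsim N_1^2N^{1-\eps}$, it uses \re{est-L2l2} on each factor to gain $N^{-1/2+\eps}$; (iii) when $L_{max}=L_1\sim N_1^2N$, it performs the change of variables $N\sim L_1N_1^{-2}$, converting the sum over $N$ into the modulation sum already controlled by $\S^{-1}$. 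Some such $N$-gain in every subcase is indispensable, and Kato smoothing plus H\"older alone cannot supply it; this is the missing idea in your sketch, beyond the case distinctions you merely flag as ``delicate''.
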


First we remark that because of the $L^2_\xi$ structure of the spaces involved in our analysis we have the following localization property
$$\|f\|_{\S^{-1}}\sim \Big(\sum_N\|P_Nf\|_{\S^{-1}}^2\Big)^{1/2}\quad\textrm{and}\quad \|f\|_{\NN^{-1}}\sim \Big(\sum_N\|P_Nf\|_{\NN^{-1}}^2\Big)^{1/2}.$$
Performing a dyadic decomposition for $u, v$ we thus obtain
\begin{equation}\label{eq-dec}\|\partial_x(uv)\|_{\NN^{-1}}\sim \Big(\sum_N\Big\|\sum_{N_1,N_2}P_N\partial_x(P_{N_1}uP_{N_2}v)\Big\|_{\NN^{-1}}^2\Big)^{1/2}.\end{equation}
We can now reduce the number of case to analyze by noting that the right-hand side vanishes unless one of the following cases holds:
\begin{list}{$\bullet$}
\item (high-low interaction) $N\sim N_2$ and $N_1\lesssim N$,
\item \item (low-high interaction) $N\sim N_1$ and $N_2\lesssim N$,
\item (high-high interaction) $N\ll N_1\sim N_2$.
\end{list}
\noindent
The former two cases are symmetric. In the first case, we can rewrite the right-hand side of (\ref{eq-dec}) as
$$\|\partial_x(uv)\|_{\NN^{-1}}\sim \Big(\sum_N\|P_N\partial_x(P_{\lesssim N}u P_Nv)\|_{\NN^{-1}}^2\Big)^{1/2},$$
and it suffices to prove the high-low estimate
\begin{equation}\label{HL}\tag{HL}\|P_N\partial_x(P_{\lesssim N}uP_Nv)\|_{\NN^{-1}}\lesssim \|u\|_{\S^{-1}}\|P_Nv\|_{\S^{-1}}\end{equation}
for any dyadic $N$. If we consider now the third case, we easily get
$$\|\partial_x(uv)\|_{\NN^{-1}}\lesssim \sum_{N_1}\|P_{\ll N_1}\partial_x(P_{N_1}uP_{N_1}v)\|_{\NN^{-1}},$$
and it suffices to prove for any $N_1$ the high-high estimate
\begin{equation}\label{HH}\tag{HH}\|P_{\ll N_1}\partial_x(P_{N_1}uP_{N_1}v)\|_{\NN^{-1}}\lesssim \|P_{N_1}u\|_{\S^{-1}}\|P_{N_1}v\|_{\S^{-1}}\end{equation}
since the claim follows then from Cauchy-Schwarz.

\subsection{Proof of (\ref{HL})}
We decompose the bilinear term as $$P_N\partial_x(P_{\lesssim N}uP_Nv) = \sum_{N_1\lesssim N}\sum_{L, L_1, L_2}P_NQ_L\partial_x(P_{N_1}Q_{L_1}uP_NQ_{L_2}v).$$
Using the well-known resonance relation
\begin{equation}\label{eq-smooth}\xi_1^3+\xi_2^3+\xi_3^3=3\xi_1\xi_2\xi_3\quad\textrm{whenever}\quad \xi_1+\xi_2+\xi_3=0,\end{equation}
we see that non-trivial interactions only happen when
\begin{equation}\label{eq-LmaxHL}L_{max}\sim \max(N^2N_1,L_{med})\end{equation}
where $L_{max}\geq L_{med}\geq L_{min}$ holds for $L, L_1, L_2$.

First we consider the easiest case $N_1\lesssim 1$. We take advantage of the $Y^{-1,-\frac 12}$ part of $\NN^{-1}$ as well as H\"{o}lder and Bernstein inequalities to obtain
\begin{align*}
\sum_{N_1\lesssim 1}\|P_N\partial_x(P_{N_1}uP_Nv)\|_{Y^{-1,-\frac 12}} &\lesssim \sum_{N_1\lesssim 1}\cro{N}^{-1}N\|P_N(P_{N_1}uP_Nv)\|_{L^1_tL^2_x}\\
&\lesssim \sum_{N_1\lesssim 1}\|P_{N_1}u\|_{L^2_tL^\infty_x}\|P_Nv\|_{L^2}\\
&\lesssim \sum_{N_1\lesssim 1}N_1^{1/2}\|P_{N_1}u\|_{L^2}\|P_Nv\|_{L^2}\\
&\lesssim \|u\|_{\S^{-1}}\|v\|_{\S^{-1}}
\end{align*}
where we used (\ref{est-L2S-1}) in the last estimate.
One can now assume we have large space frequencies, i.e. $N\gtrsim N_1\gtrsim 1$.

\subsubsection{Case $L_{max}=L$}\label{sec-L-HL}
In light of (\ref{eq-LmaxHL}), we are in the region $L\gtrsim N^2N_1$. From the definition of $X^{-1,\frac 12,1}$ we have
\begin{align*}
&\sum_{1\lesssim N_1\lesssim N}\sum_{L\gtrsim N^2N_1}\|P_NQ_L\partial_x(P_{N_1}uP_Nv)\|_{X^{-1,-\frac 12,1}}\\
&\quad \lesssim \sum_{1\lesssim N_1\lesssim N}\sum_{L\gtrsim N^2N_1}N^{-1}\cro{L}^{-1/2}N\|P_NQ_L(P_{N_1}uP_Nv)\|_{L^2}.
\end{align*}
Then, estimates (\ref{est-L2S-1}) and (\ref{est-Lit}) lead to the bound
\begin{align*}
& \lesssim \sum_{1\lesssim N_1\lesssim N}N^{-1}N_1^{-1/2}\|P_{N_1}u\|_{L^\infty_tL^2_x}\|P_Nv\|_{L^2_tL^\infty_x}\\
& \lesssim \sum_{1\lesssim N_1\lesssim N}N_1^{1/2}N^{-1/2} \|P_{N_1}u\|_{L^\infty_tH^{-1}_x} \|P_Nv\|_{L^2}\\
&\lesssim \|u\|_{\S^{-1}}\|P_Nv\|_{\S^{-1}}.
\end{align*}

\subsubsection{Case $L_{max}=L_1$}\label{sec-LmaxL1}
Here we must have either $L_1\sim N^2N_1$ or $L_1\sim L_{med}$. Note that the second case has been treated in Subsection \ref{sec-L-HL} when $L_{med}=L$ and we reduce to $L_1\sim L_2$.
The contribution for the former case can be estimated as follows:
\begin{align*}
&\sum_{1\lesssim N_1\lesssim N}\sum_{L_1\sim N^2N_1}\|P_NQ_L\partial_x(P_{N_1}Q_{L_1}uP_Nv)\|_{Y^{-1,-\frac 12}}\\
&\quad\lesssim \sum_{1\lesssim N_1\lesssim N}\|P_{N_1}Q_{N^2N_1}uP_Nv\|_{L^1_tL^2_x}\\
&\quad\lesssim \sum_{1\lesssim N_1\lesssim N}N_1^{1/2}\|P_{N_1}Q_{N^2N_1}u\|_{L^2}\|P_Nv\|_{L^2}.
\end{align*}
Now we can exploit the smoothing relation $L_1\sim N^2N_1$ and obtain
\begin{equation}\label{eq-srHL}N_1^{1/2}\|P_{N_1}Q_{N^2N_1}u\|_{L^2} \lesssim N^{-1}N_1(N_1^{-1}L_1^{1/2}\|P_{N_1}Q_{L_1}u\|_{L^2}),\end{equation}
which combined with (\ref{est-L2S-1}), (\ref{est-L2l2}) and Cauchy-Schwarz in $N_1$ yields the desired bound.

It remains to treat the case $L_1\sim L_2\gtrsim N^2N_1$ where we can use both on $L_1$ and $L_2$ the smoothing relation. Arguing as before we get
\begin{align*}
&\sum_{1\lesssim N_1\lesssim N}\sum_{L_1\sim L_2\gtrsim N^2N_1}\|P_N\partial_x(P_{N_1}Q_{L_1}uP_NQ_{L_2}v)\|_{Y^{-1,-\frac 12}}\\
&\quad\lesssim \sum_{1\lesssim N_1\lesssim N}\sum_{L_1\gtrsim N^2N_1}N_1^{1/2}\|P_{N_1}Q_{L_1}u\|_{L^2}\|P_NQ_{L_1}v\|_{L^2}.
\end{align*}
In this regime, (\ref{eq-srHL}) is still valid if we replace $Q_{N^2N_1}$ by $Q_{L_1}$. Applied on $u$ and $v$, this provides the bound
\begin{align*}
&\quad\lesssim \sum_{1\lesssim N_1\lesssim N}N_1^{1/2}N^{-1}\Big(\sum_{L_1}(N_1^{-1}L_1^{1/2}\|P_{N_1}Q_{L_1}u\|_{L^2})^2\Big)^{1/2}
\Big(\sum_{L_1}(N^{-1}L_1^{1/2}\|P_NQ_{L_1}v\|_{L^2})^2\Big)^{1/2}\\
&\quad\lesssim \sum_{1\lesssim N_1\lesssim N}N_1^{1/2}N^{-1}\|P_{N_1}u\|_{\S^{-1}}\|P_Nv\|_{\S^{-1}},
\end{align*}
which is acceptable (with about $N^{-1/2}$ of spare).

\subsubsection{Case $L_{max}=L_2$}
By (\ref{eq-LmaxHL}), it suffices to consider the case $L_2\sim N^2N_1$. With a similar argument we get
\begin{align*}
&\sum_{N_1\lesssim N}\sum_{L_2\sim N^2N_1}\|P_N\partial_x(P_{N_1}uP_NQ_{L_2}v)\|_{Y^{-1,-\frac 12}}\\
&\quad\lesssim \sum_{N_1\lesssim N}N_1^{1/2}\|P_{N_1}u\|_{L^2}\|P_NQ_{N^2N_1}v\|_{L^2}\\
&\quad\lesssim \Big(\sum_{N_1}\|P_{N_1}u\|_{L^2}^2\Big)^{1/2}\Big(\sum_{N_1}(N_1^{1/2}\|P_NQ_{N^2N_1}v\|_{L^2})^2\Big)^{1/2}\\
&\quad\lesssim \|u\|_{\S^{-1}}\Big(\sum_{L_2}(N^{-1}L_2^{1/2}\|P_NQ_{L_2}v\|_{L^2})^2\Big)^{1/2},
\end{align*}
which achieves the proof of (\ref{HL}).

\subsection{Proof of (\ref{HH})}
Performing the decomposition
$$P_{\ll N_1}\partial_x(P_{N_1}uP_{N_1}v) = \sum_{N\ll N_1}\sum_{L,L_1,L_2}P_NQ_L\partial_x(P_{N_1}Q_{L_1}uP_{N_1}Q_{L_2}v),$$
we see from (\ref{eq-smooth}) that we may restrict ourself to the region where \begin{equation}\label{eq-LmaxHH}L_{max}\sim \max(N_1^2N,L_{med}).\end{equation}
Moreover, we may assume by symmetry that $L_1\geq L_2$.
Low frequencies $N\lesssim 1$ are easily handled:
\begin{align*}
\sum_{N\lesssim 1}\|P_N\partial_x(P_{N_1}uP_{N_1}v)\|_{Y^{-1,-\frac 12}} &\lesssim \sum_{N\lesssim 1}\cro{N}^{-1}N\|P_N(P_{N_1}uP_{N_1}v)\|_{L^1_tL^2_x}\\
&\lesssim \sum_{N\lesssim 1}\|P_{N_1}u\|_{L^2}\|P_{N_1}v\|_{L^2}\\
&\lesssim \|P_{N_1}u\|_{\S^{-1}}\|P_{N_1}v\|_{\S^{-1}}.
\end{align*}
Therefore it is sufficient to consider $N_1\gg N\gtrsim 1$.
\subsubsection{Case $L_{max}=L$}
In this region one has $L\gtrsim N_1^2N$. Let us assume $L_1\lesssim N_1^2N^{1-\eps}$ for some $\eps>0$ so that we wish to bound
\begin{equation}\label{temr-HH1}
\Big\|\sum_{1\lesssim N\ll N_1}P_NQ_{\gtrsim N_1^2N}\partial_x(P_{N_1}Q_{\lesssim N_1^2N^{1-\eps}}uP_{N_1}v)\Big\|_{X^{-1,-\frac 12,1}}.\end{equation}
Using the triangle inequality we reduce to estimate
$$
\sum_{1\lesssim N\ll N_1}\sum_{\substack{L\gtrsim N_1^2N\\ L_1\lesssim N_1^2N^{1-\eps}}}L^{-1/2}\|P_{N_1}Q_{L_1}uP_{N_1}v\|_{L^2}.$$
In order to get a suitable control for this term, we apply the Kato smoothing effect (\ref{est-smooth}) together with estimate (\ref{est-L2S-1}) to get
\begin{align*}
\|P_{N_1}Q_{L_1}uP_{N_1}v\|_{L^2}& \lesssim \|P_{N_1}Q_{L_1}u\|_{L^2_xL^\infty_t}
\|P_{N_1}v\|_{L^\infty_xL^2_t}\\ &\lesssim L_1^{1/2} \|P_{N_1}u\|_{\S^{-1}}\|P_{N_1}v\|_{\S^{-1}}.
\end{align*}
Therefore it remains to establish
$$\sum_{1\lesssim N\ll N_1}\sum_{\substack{L\gtrsim N_1^2N\\ L_1\lesssim N_1^2N^{1-\eps}}}
L^{-1/2}L_1^{1/2}\lesssim 1,$$
but this is easily verified by Schur's test for any $\eps>0$. The situation where $L_2\lesssim N_1^2N^{1-\eps}$
is identical to the previous case and we suppose now $L_1,L_2\gtrsim N_1^2N^{1-\eps}$. Estimating the $\NN^{-1}$-norm by the $Y^{-1,-\frac 12}$-norm, and using the H\"{o}lder and Bernstein inequalities we see that the contribution in this case is bounded by

\begin{align}\notag &  \sum_{1\lesssim N\ll N_1}\|P_N(P_{N_1}Q_{\gtrsim N_1^2N^{1-\eps}}uP_{N_1}Q_{\gtrsim N_1^2N^{1-\eps}}v\|_{L^1_tL^2_x}\\
\label{est-HH2} & \quad\lesssim \sum_{1\lesssim N\ll N_1}N^{1/2}\|P_{N_1}Q_{\gtrsim N_1^2N^{1-\eps}}u\|_{L^2}\|P_{N_1}Q_{\gtrsim N_1^2N^{1-\eps}}v\|_{L^2}.
\end{align}
On the other hand the resonance relation and (\ref{est-L2l2}) yield
\begin{align*}N^{1/2}\|P_{N_1}Q_{\gtrsim N_1^2N^{1-\eps}}u\|_{L^2} &\lesssim N^{\eps/2} \Big(\sum_{L_1}[N_1^{-1}L_1^{1/2}\|P_{N_1}Q_{L_1}u\|_{L^2}]^2\Big)^{1/2}\\&\lesssim N^{\eps/2}\|P_{N_1}u\|_{\S^{-1}},\end{align*}
and similarly for $v$. Inserting this into (\ref{est-HH2}) we deduce
$$
(\ref{est-HH2}) \lesssim \sum_{N\gtrsim 1}N^{-1/2+\eps}\|P_{N_1}u\|_{\S^{-1}}\|P_{N_1}v\|_{\S^{-1}},
$$
which is acceptable for $\eps<1/2$.

\subsubsection{Case $L_{max}=L_1$}
First we consider the region $L_1\sim N_1^2N$ and we want to estimate
\begin{align*}
&\|\sum_{N\ll N_1}P_N\partial_x(P_{N_1}Q_{N_1^2N}uP_{N_1}v)\|_{Y^{-1,-\frac 12}}\\
& \quad\lesssim \Big(\sum_{N}[N^{1/2}\|P_{N_1}Q_{N_1^2N}u\|_{L^2}\|P_{N_1}v\|_{L^2}]^2\Big)^{1/2}
\end{align*}
where we took care of not using the triangle inequality in order to keep the $\ell^2$-norm in $N$.  The term $\|P_{N_1}u\|_{L^2}$ can be handled with help of (\ref{est-L2S-1}), while the change of variable $N\sim L_1N_1^{-2}$ for fixed $N_1$ leads to the bound
$$
\lesssim \Big(\sum_{L_1} [N_1^{-1}L_1^{1/2}\|P_{N_1}Q_{L_1}u\|_{L^2}]^2\Big)^{1/2}\|P_{N_1}v\|_{\S^{-1}}
\lesssim \|P_{N_1}u\|_{\S^{-1}}\|P_{N_1}v\|_{\S^{-1}}.
$$
Finally in the case $L_1\sim L_2\gtrsim N_1^2N$, arguing as in Subsection \ref{sec-LmaxL1}, we get
\begin{align*}
&\Big\|\sum_{1\lesssim N\ll N_1}\sum_{L_1\sim L_2\gg N_1^2N}P_N\partial_x(P_{N_1}Q_{L_1}uP_{N_1}Q_{L_2}v)\Big\|_{Y^{-1,-\frac 12}}\\
& \quad\lesssim \sum_{1\lesssim N\ll N_1}\sum_{L_1\gg N_1^2N}N^{1/2}\|P_{N_1}Q_{L_1}u\|_{L^2}\|P_{N_1}Q_{L_1}v\|_{L^2}\\
& \quad\lesssim \sum_{N\gtrsim 1} N^{-1/2}\Big(\sum_{L_1}(N_1^{-1}L_1^{1/2}\|P_{N_1}Q_{L_1}u\|_{L^2})^2\Big)^{1/2}\Big(\sum_{L_1}(N_1^{-1}L_1^{1/2}\|P_{N_1}Q_{L_1}v\|_{L^2})^2\Big)^{1/2},
\end{align*}
which is acceptable (with about $N^{-1/2}$ of spare).
\section{Well-posedness}\label{sectionwell}
In this section, we prove the well-posedness result. Using a standard fixed point procedure, it is clear that the bilinear estimate (\ref{est-bil}) allows us to show local well-posedness but for small initial data only. This is because $H^{-1}$ appears as a critical space for KdV-Burgers and thus we can't get the desired contraction factor in our estimates. In order to remove the size restriction on the data, we need to change the metric on our resolution space.

For $\beta\ge 1 $, let us define the following norm on $ \S^{-1} $,
$$\|u\|_{\ZZ_\beta} = \inf_{\substack{u=u_1+u_2\\ u_1\in\S^{-1}, u_2\in \S^0}}\left\{\|u_1\|_{\S^{-1}}+\frac 1\beta \|u_2\|_{\S^0}\right\}.$$
Note that this norm is equivalent to $ \| \cdot \|_{\S^{-1}} $.   Now we will need the following modification
 of Proposition \ref{propo}. This new proposition means that as soon as we assume more regularity  on $ u$ we can get
  a contractive factor for small times in the bilinear estimate.
 \begin{proposition}
There exists $ \nu>0 $ such that for all $(u,v)\in\S^{0}\times \S^{-1} $, with compact support (in time) in $[-T,T]$,
 it holds\
\begin{equation}\label{est-bil3}\|\partial_x(uv)\|_{\NN^{-1}}\lesssim T^\nu \|u\|_{\S^{0}}\|v\|_{\S^{-1}}.
\end{equation}
 \end{proposition}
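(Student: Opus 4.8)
The plan is to revisit the proof of Proposition \ref{propo} and track where a power of $T$ can be gained, exploiting the extra regularity of $u\in\S^0$ over $\S^{-1}$. The key structural observation is that the compact time-support hypothesis lets us insert a time-localized cutoff and convert it into frequency gains via the $L^1_t$-in-time structure: if $u$ is supported in $[-T,T]$, then $u=\eta_T u$ (with $\eta_T(t)=\eta(t/T)$ as in Lemma \ref{lem-Xinfty}), and multiplication by $\eta_T$ in physical time corresponds to convolution with $T\widehat\eta(T\cdot)$ in the modulation variable $\tau$. This smoothing/spreading in $\tau$ is precisely what produces a factor $T^\nu$ when paired with the smoothing relation \eqref{eq-smooth}. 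I would therefore first record an auxiliary estimate quantifying this gain, roughly of the form that time-truncation to $[-T,T]$ improves any low-modulation bound by $T^\nu$ for some small $\nu>0$.

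\medskip

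Next I would run through exactly the same case analysis as in Proposition \ref{propo}: the high-low regime \eqref{HL}, the high-high regime \eqref{HH}, and within each the subcases $L_{max}=L$, $L_{max}=L_1$, $L_{max}=L_2$. In every one of these cases the earlier proof closed \emph{with room to spare} — the paper repeatedly notes ``acceptable (with about $N^{-1/2}$ of spare)'' in the subsections treating $L_{max}=L_1$ and $L_{max}=L_2$. This spare $N^{-1/2}$ factor is the crucial resource: since $u$ now lives in $\S^0$ rather than $\S^{-1}$, every place where the old proof spent a factor $\cro{N_1}^{-1}$ (or $\cro{N}^{-1}$) on the $u$-input can instead retain that factor, and the accumulated surplus, summed dyadically against the compact-support gain, produces the clean power $T^\nu$. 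Concretely, I would replace the bound $\|P_{N_1}u\|_{\S^{-1}}$ by $\cro{N_1}^{-1}\|P_{N_1}u\|_{\S^0}$ wherever $u$ appears, feed the extra $\cro{N_1}$ into the smoothing relation $L_{max}\sim N_1^2 N$, and cash out part of it for the $T^\nu$ factor using the time-localization estimate, keeping the remainder to guarantee dyadic summability.

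\medskip

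The main obstacle, as flagged already for the base proposition, is the \emph{high-high} regime, specifically the subcase $L_{max}=L$ in \eqref{HH}, where two inputs carry the same high frequency $N_1$ and the output frequency $N$ is small. There the earlier argument was delicate: it split on whether $L_1\lesssim N_1^2N^{1-\eps}$ and used the Kato smoothing effect \eqref{est-smooth} together with \eqref{est-L2S-1}, and the summation closed only for $\eps<1/2$. In the present setting I expect to have to choose the truncation parameter in the time-localization estimate carefully against this $\eps$, so that the $T^\nu$ gain survives without destroying the $N^{-1/2+\eps}$ decay that made \eqref{est-HH2} summable. The safe route is to take $\nu$ quite small — small enough that the loss of $T^{-\nu}$ worth of modulation in the worst subcase is absorbed by the existing spare decay — and then verify by a final Schur's test that the doubly-dyadic sum over $(N,N_1)$ and over $(L,L_1,L_2)$ still converges. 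Once the high-high case is handled, the high-low case \eqref{HL} is strictly easier, since there the frequencies are well-separated and the surplus decay is even more comfortable.
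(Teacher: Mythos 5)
You have the right skeleton: the paper's proof likewise rests on a Ginibre--Tsutsumi--Velo type time-localization lemma (stated as \re{strichartz}: for $f$ compactly supported in time in $[-T,T]$, $\|\F^{-1}(\cro{\tau-\xi^3}^{-\theta}\tilde{f})\|_{L^2}\lesssim T^{\mu(\theta)}\|f\|_{L^2}$), whose consequence \re{lala}, namely $\|w\|_{L^2_tH^{3/4}}\lesssim T^{\mu(1/8)}\|w\|_{\S^0}$ for compactly supported $w\in\S^0$, is then injected into a rerun of the case analysis of Proposition \ref{propo}. Up to that point your outline and the paper agree.

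The genuine gap is that you have inverted which regime is dangerous, and your mechanism breaks down precisely in the regime you dismiss. The paper states that the interactions of \emph{high} frequencies of $u$ with high or low frequencies of $v$ --- in particular all of \re{HH} --- are the easy part: there $u$ carries the large frequency $N_1$, the replacement $\|P_{N_1}u\|_{\S^{-1}}\lesssim\cro{N_1}^{-1}\|P_{N_1}u\|_{\S^0}$ yields an enormous surplus, and the old argument plus \re{lala} closes at once. The dangerous region is \re{HL}, \emph{low} frequencies of $u$ against high frequencies of $v$, which you declare ``strictly easier''. Two things go wrong with your plan there. First, your premise that ``in every one of these cases the earlier proof closed with room to spare'' is false: in Subsection \ref{sec-L-HL} the frequency sum $\sum_{1\lesssim N_1\lesssim N}N_1^{1/2}N^{-1/2}$ is exactly $O(1)$, and the case $L_{max}=L_2$ of \re{HL} also closes with no spare; the ``$N^{-1/2}$ of spare'' remarks occur only in Subsection \ref{sec-LmaxL1} and in \re{HH}. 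Second, and more fundamentally, a spare power of a dyadic frequency cannot be ``cashed out'' for $T^\nu$: the time-localization lemma trades \emph{modulation} weight, not frequency weight, for $T^\mu$, so to invoke it in \re{HL} one must restructure the estimates until an $L^2_t$-based (or modulation-weighted) norm of the compactly supported factor $u$ actually appears. This is what the paper does: it swaps the H\"older pairing of Proposition \ref{propo}, bounding $\|P_{N_1}u\,P_Nv\|_{L^2}\lesssim\|P_{N_1}u\|_{L^2_tL^\infty_x}\|P_Nv\|_{L^\infty_tL^2_x}$ so that \re{lala} acts on $u$ --- and this is where $u\in\S^0$ is genuinely needed even at low $N_1$, since from $\S^{-1}$ the same trade only yields $L^2_tH^{-1/4}$ control, after which the $N_1$-sum diverges like $N^{1/4}$ --- while $v$ is absorbed through $\|P_Nv\|_{L^\infty_tH^{-1}}\lesssim\|P_Nv\|_{\S^{-1}}$, i.e.\ estimate \re{est-Lit}; in Subsection \ref{sec-LmaxL1} it similarly replaces \re{eq-srHL} by $N_1^{1/2}\|P_{N_1}Q_{N^2N_1}u\|_{L^2}\lesssim N_1^{-1/4}\|P_{N_1}u\|_{L^2_tH^{3/4}}\lesssim N_1^{-1/4}T^{\mu(1/8)}\|u\|_{\S^0}$. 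Without some such restructuring your proof simply has no source of $T^\nu$ in the low-$u$/high-$v$ interactions, so it does not close; meanwhile the care you invest in \re{HH} (tuning $\nu$ against the $\eps$ of the $N^{-1/2+\eps}$ decay) is spent where none is needed, since there the surplus $\cro{N_1}^{-1}$ dwarfs any loss incurred by the time-localization step.
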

 \begin{proof}
It suffices to slightly modify  the proof of Proposition \ref{propo} to make use of  the following result that can be found in   [\cite{GTV}, Lemma 3.1] (see
 also  [\cite{MR2}, Lemma 3.6]):
 For any $\theta >0$, there exists $\mu=\mu(\theta)>0$ such that for any smooth function  $ f$ with compact support in
  time in $[-T,T]$,
\begin{equation}
 \left\| {\mathcal F}^{-1}_{t,x}\left( \frac{\hat{f}(\tau,\xi)} {\langle\tau - \xi^3\rangle^{\theta}}\right) \right\|_{L^{2}_{t,x}}
 \lesssim
   T^\mu \|f\|_{L^{2,2}_{t,x}} \;.
 \label{strichartz}
 \end{equation}
 According to \re{est-L2l2} this  ensures, in particular,  that  for any $ w\in \S^0 $ with compact support in $[-T,T] $  it holds
  \begin{equation}\label{lala}
 \| w\|_{L^2_t H^{3/4}}\lesssim  \| w\|_{X^{0,3/8,2}}\lesssim
 T^{\mu(\frac{1}{8})}  \| w\|_{X^{0,1/2,2}}\lesssim  T^{\mu(\frac{1}{8})} \|w\|_{\S^0} \; .
 \end{equation}
  It is pretty clear that the interactions between high  frequencies of $ u $ and high or low frequencies of $ v$
   can be treated by following   the proof of Proposition \ref{propo}
   and using \re{lala}. The  region that seems the most dangerous is   the one of interactions between low frequencies of
    $ u $ and  high frequencies of $ v$, that is the region of $ (HL)$ in the proof of Proposition \ref{propo}.
     But actually this region can also be easily treated. For instance in the case \ref{sec-L-HL} it suffices to notice that
     \arraycolsep2pt
      \begin{eqnarray*}
\sum_{1\lesssim N_1\lesssim N} & &\sum_{L\gtrsim N^2N_1} \|P_NQ_L\partial_x(P_{N_1}u P_Nv)\|_{X^{-1,-\frac 12,1}}\\
&&\ \lesssim \sum_{1\lesssim N_1\lesssim N}\sum_{L\gtrsim N^2N_1}  N^{-1}\cro{L}^{-1/2}
N\|P_NQ_L(P_{N_1} u P_Nv)\|_{L^2}.\\
&& \lesssim  \sum_{1\lesssim N_1\lesssim N}
N^{-1}N_1^{-1/2} \| P_{N_1} u\|_{L^2_t L^\infty_x }   \|P_N v\|_{L^\infty_t L^2_x}\\
&& \lesssim \sum_{1\lesssim N_1\lesssim N} N_1^{-1/2}
 \| P_{N_1} u\|_{L^2_t H^{1/2}_x }  \|P_N v\|_{L^\infty_t H^{-1}_x}\\
 && \lesssim T^{\mu(\frac{1}{8})}   \|u\|_{\S^0} \|v\|_{\S^{-1}}
\end{eqnarray*}
  and in the case \ref{sec-LmaxL1} it suffices to replace \re{eq-srHL} by simply
  $$
 N_1^{1/2}\|P_{N_1}Q_{N^2N_1}u\|_{L^2} \lesssim N_1^{-1/4} \| P_{N_1} u\|_{L^2_t H^{3/4}_x } \lesssim
  N_1^{-1/4} T^{\mu(\frac{1}{8})}   \|u\|_{\S^0}\; .
  $$
  The other cases can be handle in a similar way.
  \end{proof}
We are now in position   to prove that the application
$$F_\phi^T : u\mapsto \eta(t)\Big[W(t)\phi-\frac 12\L\partial_x(\eta_T u)^2\Big],$$
where $\L$ is defined in (\ref{eq-L}), is contractive on a ball of $\ZZ_\beta$ for a suitable $\beta>0 $
 and $ T>0 $ small enough. Assuming this for a while, the  local part of Theorem \ref{wellposed}  follows by using standard arguments. Note that the uniqueness will hold in the restriction spaces 
   ${\mathcal S}^{-1}_\tau $ endowed with the norm 
 $$
 \|u\|_{ {\mathcal S}^{-1}_\tau}:=\inf_ {v\in {\mathcal S}^{-1}} \{ \|v\|_{{{\mathcal S}^{-1}}}, \; v\equiv u \mbox{ on } [0,\tau] \}\; .
 $$
 Finally, to see that the solution $ u $ can be extended for all positive times and belongs to $C(\R_+^*;H^\infty) $ it suffices to notice that, according to 
  \re{est-L2S-1}, 
  $u\in {\mathcal S}^{-1}_\tau \hookrightarrow L^2(]0,\tau[\times \R)$ . Therefore, for any $0<\tau'<\tau $  there exists  
  $t_0\in ]0,\tau'[ $, such that $ u(t_0)$ belongs  to $L^2(\R)$ . Since according to   \cite{MR2}, \re{KdVB} is globally  well-posed in $ L^2(\R) $ with a solution belonging to $ C(\R^*_+;H^\infty(\R)) $,  the conclusion follows.

  In order to prove that $ F_\phi^T $ is contractive, the first step is  to establish the following result.
\begin{proposition}\label{prop-bilZ} For any  $ \beta\ge 1 $ there   exists $0<T=T(\beta)<1$ such that for any $u,v\in\ZZ_{\beta}$ with compact support in $[-T,T]$ we have
\begin{equation}\label{est-bilZ}
\|\L\partial_x(uv)\|_{\ZZ_\beta}\lesssim \|u\|_{\ZZ_\beta}\|v\|_{\ZZ_\beta}.
\end{equation}
\end{proposition}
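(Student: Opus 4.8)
The plan is to place the entire output in the $\S^{-1}$-slot of $\ZZ_\beta$ and to use the smoothing gain $T^\nu$ of \re{est-bil3} to absorb the powers of $\beta$ generated by the smooth components of $u$ and $v$. Since the choice $u_2=0$ in the infimum defining $\ZZ_\beta$ gives $\|w\|_{\ZZ_\beta}\le\|w\|_{\S^{-1}}$ for every $w\in\S^{-1}$, and since $\L\partial_x(uv)$ already lies in $\S^{-1}$ by \re{est-bil} together with \re{est-linNhom}, it will be enough to bound $\|\L\partial_x(uv)\|_{\S^{-1}}$ by $\|u\|_{\ZZ_\beta}\|v\|_{\ZZ_\beta}$. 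First I would fix near-optimal decompositions $u=u_1+u_2$, $v=v_1+v_2$ with $u_1,v_1\in\S^{-1}$ and $u_2,v_2\in\S^0$, so that $\|u_1\|_{\S^{-1}}+\beta^{-1}\|u_2\|_{\S^0}\lesssim\|u\|_{\ZZ_\beta}$ and similarly for $v$; in particular $\|u_1\|_{\S^{-1}}\lesssim\|u\|_{\ZZ_\beta}$ and $\|u_2\|_{\S^0}\lesssim\beta\|u\|_{\ZZ_\beta}$. Before using the smoothing estimate I would multiply each component by a smooth time cut-off equal to $1$ on $[-T,T]$ and supported in $[-2T,2T]$, an operation bounded on $\S^{-1}$ and on $\S^0$ by the usual time-localization argument; since $u$ and $v$ are supported in $[-T,T]$ this does not alter the identities $u=u_1+u_2$, $v=v_1+v_2$ but ensures that all four components have compact time support.

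Next I would expand $uv=u_1v_1+u_1v_2+u_2v_1+u_2v_2$ and control the four contributions of $\L\partial_x(\cdot)$ in $\S^{-1}$ one by one, applying \re{est-linNhom} at each step to pass from $\NN^{-1}$ to $\S^{-1}$. The diagonal term is handled by the unweighted estimate \re{est-bil}, which costs nothing and directly yields $\|\L\partial_x(u_1v_1)\|_{\S^{-1}}\lesssim\|u_1\|_{\S^{-1}}\|v_1\|_{\S^{-1}}\lesssim\|u\|_{\ZZ_\beta}\|v\|_{\ZZ_\beta}$; this is the term responsible for the clean bound with no small factor. For the two mixed terms I would invoke the smoothing estimate \re{est-bil3} (now legitimate thanks to the compact supports just arranged, the smooth factor playing the role of the $\S^0$ entry), which produces a factor $T^\nu$ at the price of a single $\S^0$-norm, giving a bound of order $T^\nu\beta\,\|u\|_{\ZZ_\beta}\|v\|_{\ZZ_\beta}$. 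For the purely smooth term $u_2v_2$ I would apply \re{est-bil3} again, viewing the second factor through the embedding $\S^0\hookrightarrow\S^{-1}$, to obtain $\|\L\partial_x(u_2v_2)\|_{\S^{-1}}\lesssim T^\nu\|u_2\|_{\S^0}\|v_2\|_{\S^0}\lesssim T^\nu\beta^2\,\|u\|_{\ZZ_\beta}\|v\|_{\ZZ_\beta}$.

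Summing the four estimates gives $\|\L\partial_x(uv)\|_{\S^{-1}}\lesssim(1+T^\nu\beta+T^\nu\beta^2)\|u\|_{\ZZ_\beta}\|v\|_{\ZZ_\beta}$, so the proof of \re{est-bilZ} concludes by choosing $T=T(\beta)\in(0,1)$ small enough that $T^\nu\beta^2\le 1$, i.e. $T\lesssim\beta^{-2/\nu}$. I expect the main difficulty to be one of organization rather than of new analysis: the delicate point is the bookkeeping of the powers of $\beta$, which is exactly what forces the dependence $T=T(\beta)$ and explains why the statement only claims the existence of a suitable $T$ for each fixed $\beta$. A secondary technical point is the reduction to compactly supported components needed before \re{est-bil3} can be applied, which relies solely on the boundedness of time cut-offs on $\S^{-1}$ and $\S^0$. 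Note that no contractive factor is asserted here; the smallness that will eventually drive the fixed point for large data will instead come from working in a ball of $\ZZ_\beta$ of small radius, that radius being made small by splitting the datum $\phi$ into low and high frequencies and taking $\beta$ large.
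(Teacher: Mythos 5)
Your proof takes essentially the same route as the paper's: the same near-optimal decomposition $u=u_1+u_2$, $v=v_1+v_2$ from the definition of $\ZZ_\beta$, the same splitting into the rough term (estimated via \re{est-linNhom} and \re{est-bil} with no loss), the mixed terms (costing $T^\nu\beta$ via \re{est-bil3}), and the smooth term (costing $T^\nu\beta^2$), leading to the identical bound $(1+(\beta+\beta^2)T^\nu)\|u\|_{\ZZ_\beta}\|v\|_{\ZZ_\beta}$ and the choice $T\sim\beta^{-2/\nu}$. Your additional time-cutoff step, ensuring the decomposition components are themselves compactly supported before \re{est-bil3} is invoked, is a minor technical refinement of a point the paper's proof passes over silently, not a different approach.
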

Assume for the moment that (\ref{est-bilZ}) holds and let $u_0\in H^{-1}$ and $\alpha>0$.
Split the data $u_0$ into low and high frequencies:
$$u_0=P_{\lesssim N}u_0+P_{\gg N}u_0$$
for a dyadic number $N$. Taking $N=N(\alpha)$ large enough, it is obvious to check that $\|P_{\gg N}u_0\|_{H^{-1}}\leq \alpha$. Hence, according to \re{est-lin},
$$\|\eta(\cdot)W(\cdot)P_{\gg N}u_0\|_{\ZZ_\beta}\lesssim \alpha.$$
Using now the $\S^0 $-part of $\ZZ_\beta$, we control the low frequencies as follows:
$$\|\eta(\cdot)W(\cdot)P_{\lesssim N}u_0\|_{\S^0} \lesssim \frac 1\beta \|P_{\lesssim N}u_0\|_{L^2}\lesssim \frac{N}{\beta}\|u_0\|_{H^{-1}}.$$
Thus we get
$$\|\eta(\cdot)W(\cdot)P_{\lesssim N}u_0\|_{\ZZ_\beta}\lesssim \alpha\ \textrm{ for }\ \beta\gtrsim\frac{N \|u_0\|_{H^{-1}}}{\alpha}.$$
Since $\alpha$ can be chosen as small as needed, we conclude with (\ref{est-bilZ}) that $F_\phi^T$ is contractive on a ball of $\ZZ_\beta$ of radius $R\sim \alpha$ as soon as $ \beta \gtrsim  N \|u_0\|_{H^{-1}}/\alpha $
 and $T= T(\beta)$.

\begin{proof}[Proof of Proposition \ref{prop-bilZ}] By definition on the function space $ {\mathcal Z}_\beta $,
there exist $u_1,v_1\in\S^{-1}$ and $u_2,v_2\in \S^0 $ such that $u=u_1+v_1$, $v=v_1+v_2$ and
\begin{align*}\|u_1\|_{\S^{-1}}+\frac 1\beta\|u_2\|_{\S^0}\leq & 2\|u\|_{\ZZ_\beta},\\ \|v_1\|_{\S^{-1}}+\frac 1\beta\|v_2\|_{\S^0}\leq & 2\|v\|_{\ZZ_\beta}.
\end{align*}
Thus one can decompose the left-hand side of (\ref{est-bilZ}) as
\begin{align*}
\|\L\partial_x(uv)\|_{\ZZ_\beta} &\lesssim \|\L\partial_x(u_1v_1)\|_{\S^{-1}}+\|\L\partial_x(u_1v_2+u_2v_1)\|_{\S^{-1}}+\|\L\partial_x(u_2v_2)\|_{\S^{-1}}\\
&= I+II+III.
\end{align*}
From the estimates (\ref{est-linNhom}) and (\ref{est-bil}) we get
$$I\lesssim  \|\partial_x(u_1v_1)\|_{\NN^{-1}}\lesssim \|u_1\|_{S^{-1}}\|v_1\|_{\S^{-1}}\lesssim \|u\|_{\ZZ_\beta}\|v\|_{\ZZ_\beta}.$$
On the other hand, we obtain from (\ref{est-bil3}) that
$$III\lesssim T^{\nu}\|u_2\|_{\S^0}\|v_2\|_{\S^0}\lesssim \beta^2 T^{\nu}\|u\|_{\ZZ_\beta}
\|v\|_{\ZZ_\beta}.$$
and
\begin{eqnarray*}
II &  \lesssim &  T^{\nu}(\|u_1\|_{\S^{-1}}\|v_2\|_{\S^0}+\|u_2\|_{\S^0}\|v_1\|_{\S^{-1}})\\
&\lesssim &  \beta T^{\nu}\|u\|_{\ZZ_\beta}\|v\|_{\ZZ_\beta}.
\end{eqnarray*}
We thus get
$$\|\L\partial_x(uv)\|_{\ZZ_\beta}\lesssim (1+(\beta+\beta^2) T^\nu)\|u\|_{\ZZ_\beta}\|v\|_{\ZZ_\beta}.$$
This ensures that (\ref{est-bilZ}) holds for $T\sim \beta^{-2/\nu}\le 1 $.
\end{proof}

\begin{center}Luc Molinet, \\
{\small 
  Laboratoire de Math\'ematiques et Physique Th\'eorique, Universit\'e Fran\c cois Rabelais Tours, F\'ed\'eration Denis Poisson-CNRS, Parc Grandmont, 37200 Tours, FRANCE.  {\it Luc.Molinet@lmpt.univ-tours.fr}}\vspace*{4mm}\\
St\'ephane Vento\\
{\small  L.A.G.A., Institut Galil\'ee, Universit\'e Paris 13,\\
93430 Villetaneuse, FRANCE. {\it vento@math.univ-paris13.fr}}
\end{center}

\end{document}